\newtheorem{theorem}{Théorème}[section]
\newtheorem{proposition}[theorem]{Proposition}
\newtheorem{lemma}[theorem]{Lemme}
\newtheorem{definition}[theorem]{Définition}
\newtheorem{example}[theorem]{Exemple}
\newtheorem{remarque}[theorem]{Remarque}
\newcommand{\vc}{\|\cdot\|}
\newcommand{\C}{\mathbb{C}}
\newcommand{\Z}{\mathbb{Z}}
\newcommand{\lra}{\longrightarrow}
\newcommand{\al}{\alpha}
\newcommand{\la}{\lambda}
\newcommand{\R}{\mathbb{R}}
\newcommand{\cl}{\mathcal{C}^\infty}
\newcommand{\p}{\mathbb{P}}
\newcommand{\eps}{\varepsilon}
\newcommand{\X}{\mathcal{X}}
\newcommand{\Q}{\mathbb{Q}}
\newcommand{\N}{\mathbb{N}}
\newcommand{\z}{\overline{z}}
\newcommand{\pt}{\partial}
\title{Extension de la torsion analytique holomorphe aux fibrés en droites intégrables}
\date{}
\author{Mounir Hajli}
\begin{document}
\maketitle

\begin{abstract}
Soit $X$ une variété kählerienne compacte. On montre que  la notion de métrique de Quillen  s'étends aux métriques intégrables sur
$X$. En particulier, on établit que  la notion de torsion analytique holomorphe s'étends  à l'ensemble des
fibrés en droites intégrables $\overline{L}$ sur $X$, qui
vérifient $H^q(X,L)=0$
pour tout $q\geq 1$.
\end{abstract}

\tableofcontents

\section{Introduction}
Dans \cite{RaySinger}, Ray et Singer associent à toute  variété kählérienne compacte $(X,\omega)$ et $\overline{E}$ un fibré
hermitien de classe $\cl$ sur $X$,  un réel noté $T\bigl((X,\omega) ;\overline{E}\bigr)$ appelé la torsion analytique holomorphe, défini en posant:
\[
 T\bigl((X,\omega) ;\overline{E}\bigr)=\sum_{q\geq 0}q(-1)^{q+1}\zeta_{\Delta^q_{\overline{E}}}'(0),
\]
où $\zeta_{\Delta^q_{\overline{E}}}'(0)$ est la dérivée en zéro du prolongement analytique de la fonction Zêta $\zeta_{\Delta^q_{\overline{E}}}$ associée au spectre de l'opérateur Laplacien $\Delta^q_{\overline{E}}$ agissant sur $A^{(0,q)}(X,E)$, l'espace des $(0,q)$-formes de classe $\cl$ à coefficients dans $E$, pour tout $q\geq 0$.\\

Dans ce texte, on étend la notion de métrique de Quillen aux métriques admissibles et plus généralement aux métriques intégrables  sur les fibrés en droites holomorphes définis sur une variété kählérienne compacte. Rappelons qu'une métrique admissible sur $L$, un fibré en droites holomorphe, est par définition une limite uniforme d'une suite de
métriques positives de classe $\cl$ sur $X$. Ce sont donc des métriques continues, mais qui sont en général non $\cl$. On ne peut
pas donc appliquer directement la construction de \cite{RaySinger} pour leur associer  une torsion analytique holomorphe, mais on
procède différemment en utilisant une méthode d'approximation moyennant les formules des anomalies, qui  donnent la variation de la
métrique Quillen et par conséquent celle de la torsion analytique,
 en fonction de la variation de la métrique sur $L$.

 Notre résultat
principal s'énonce donc comme suit, voir (théorème \eqref{TAH}):

\begin{theorem}Soit $X$ une variété  complexe kählérienne compacte de dimension $N$ muni d'une forme  de  Kähler $\omega$  et   $\overline{L}=(L,\|\cdot\|)$ un fibré en droites intégrable sur $X$. Pour toute décomposition de $\overline{L}=(E_1,\vc_1)\otimes (E_2,\vc_2)^{{}^{-1}}$ en fibrés admissibles et pour tout choix de$(\vc_{i,n})_{n\in\mathbb{N}}$ une suite de métriques positives $C^\infty$ sur $E_i$ qui converge uniformément vers $\vc_i$, $i=1,2$, la suite double:

\begin{equation}
\Bigl(h_{Q,{\footnotesize{(X,\omega);(E_1\otimes E^{-1}_2,\vc_{1,n}\otimes \vc^{-1}_{2,m})}}}\Bigr)_{n,m\in\mathbb{N}}\footnote{ {\small{On dira
qu'une suite double $(x_{j,k})_{j\in \N,k\in\N}$ converge vers une limite $l$, si
 $\forall\eps>0$, $\exists A\in \R$, $\forall\, n,m>A$ alors $|x_{n,m}-l|<\eps$.}}},
 \end{equation}

est convergente et  la limite ne dépend pas ni de la décomposition ni
de la suite choisie, on l'appellera la métrique de Quillen  généralisée et on la notera par:
 \[h_{Q,{\footnotesize{(X,\omega);(L,\vc)}}}.
\]

Si $H^q\bigl(X,L\bigr)=0$, pour tout $q\geq 1$, alors la suite suivante:

\begin{equation}
\Bigl(T\bigl((X,\omega);(E_1\otimes E^{-1}_2,\vc_{1,n}\otimes \vc^{-1}_{2,m})\bigl)\Bigr)_{n,m\in\mathbb{N}},
 \end{equation}
 converge vers une limite finie.
On l'appellera la \textit{torsion analytique holomorphe de Ray-Singer généralisée } et on la notera par:
 \[
T\bigl((X,\omega),(L,\vc)\bigr).
\]
\end{theorem}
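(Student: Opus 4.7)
L'id�e principale est de ramener la convergence de la suite double � l'analyse des formules d'anomalie (dues � Bismut--Gillet--Soul�), qui expriment la variation de la m�trique de Quillen en fonction de la variation de la m�trique hermitienne sur $L$ par une int�grale de classes de Bott--Chern secondaires. Plus pr�cis�ment, pour deux m�triques $C^\infty$ positives $\vc$ et $\vc'$ sur un m�me fibr� $E$, le rapport $h_{Q,(X,\omega);(E,\vc')}/h_{Q,(X,\omega);(E,\vc)}$ s'exprime comme l'exponentielle d'une int�grale sur $X$ faisant intervenir $\log(\vc'/\vc)$ et des formes de courbure associ�es � $\vc$, $\vc'$ et $\omega$. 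C'est sur la continuit� de cette int�grale sous convergence uniforme des potentiels, combin�e � la th�orie de Bedford--Taylor pour les op�rateurs de Monge--Amp�re, que va reposer toute l'argumentation.

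Pour montrer que la suite double est de Cauchy, je fixe une d�composition $\overline{L}=(E_1,\vc_1)\otimes(E_2,\vc_2)^{-1}$ et les suites $(\vc_{i,n})_n$. Soit $(n,m)$ et $(n',m')$ deux couples d'entiers. Par la formule d'anomalie appliqu�e successivement � $E_1$ puis � $E_2^{-1}$, le rapport des m�triques de Quillen associ�es aux deux choix s'�crit comme une int�grale sur $X$ polyn�miale en les courbures $c_1(E_i,\vc_{i,k})$ et en les potentiels $\log(\vc_{i,k}/\vc_{i,k'})$, le tout multipli� par des classes de Todd holomorphes de $(TX,\omega)$. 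Quand $n,n',m,m'\to\infty$, les potentiels tendent uniform�ment vers z�ro puisque les suites $(\vc_{i,n})_n$ sont de Cauchy uniformes (car convergentes uniform�ment vers $\vc_i$), et les courants de courbure $c_1(E_i,\vc_{i,n})$ convergent faiblement vers $c_1(E_i,\vc_i)$ (Bedford--Taylor pour les m�triques positives). La continuit� des produits de Monge--Amp�re sous convergence uniforme d�croissante (ou plus g�n�ralement sous convergence uniforme de m�triques admissibles) assure alors que l'int�grale tend vers z�ro, d'o� la propri�t� de Cauchy.

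Pour l'ind�pendance par rapport � la d�composition et aux suites choisies, je compare deux choix $(\vc_{1,n}\otimes\vc_{2,m}^{-1})$ et $(\vc'_{1,n}\otimes\vc'_{2,m}^{-1})$ produisant � la limite la m�me m�trique $\vc$ sur $L$. Quitte � introduire la d�composition commune $(\vc_{1,n}\otimes\vc'_{2,m'})\otimes(\vc_{2,n}\otimes\vc'_{1,m})^{-1}$, on se ram�ne � comparer deux suites sur un m�me fibr� admissible, ce qui se fait encore par la formule d'anomalie et le m�me argument de passage � la limite via Bedford--Taylor. Le fait que la limite est un nombre r�el bien d�fini (et non juste un �l�ment d'un droit d�terminant abstrait) provient de ce que $\lambda(L)=\det H^\bullet(X,L)$ ne d�pend que de $L$ et non du choix des m�triques.

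Enfin, pour le passage de la m�trique de Quillen � la torsion analytique holomorphe sous l'hypoth�se $H^q(X,L)=0$ pour $q\geq 1$, on utilise la d�composition $h_Q=h_{L^2}\cdot e^{-T}$ o� $h_{L^2}$ est la m�trique $L^2$ sur $\det H^0(X,L)$ et $T$ est la torsion. La m�trique $L^2$ associ�e � $(\vc_{1,n}\otimes\vc^{-1}_{2,m},\omega)$ s'�crit comme le d�terminant de Gram d'une base fix�e de $H^0(X,L)$ calcul�e avec $\vc_{1,n}\otimes\vc^{-1}_{2,m}$ et $\omega^N/N!$; la convergence uniforme de cette m�trique entra�ne la convergence du d�terminant de Gram vers un r�el strictement positif (la m�trique $L^2$ g�n�ralis�e attach�e � $\vc$). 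La convergence de $h_Q$ �tablie plus haut combin�e � celle de $h_{L^2}$ fournit alors la convergence de $T$. L'obstacle principal sera la justification rigoureuse de la continuit� des int�grales de classes de Bott--Chern le long de suites uniform�ment convergentes de m�triques admissibles, qui n�cessite l'extension de ces classes au cadre des m�triques continues positives via la th�orie du pluripotentiel.
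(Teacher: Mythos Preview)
Your sketch is correct and follows essentially the same route as the paper: anomaly formula to reduce the Cauchy property of $(\log h_Q)_{n,m}$ to the convergence of Bott--Chern integrals, Bedford--Taylor continuity of Monge--Amp\`ere type products under uniform convergence of positive potentials (the paper makes this local via a partition of unity and an explicit factorisation of $\widetilde{\mathrm{ch}}$ into monomials in the $c_1(\overline{E}_{i,n})$, but the substance is identical), and then convergence of the $L^2$ metric on $\det H^0(X,L)$ via the Gram determinant to extract the torsion when $H^q(X,L)=0$ for $q\ge 1$. Note only that in the paper's convention $h_Q=h_{L^2}\exp(T)$ rather than $h_{L^2}\exp(-T)$, which does not affect your argument.
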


Lorsque $X$ est une surface de Riemann compacte, alors on obtient un résultat plus général, en effet, on peut
considérer des métriques intégrables sur $X$ et sur $L$ et on étend la notion de métrique Quillen à cette situation:
\begin{theorem}
Soit $X$ une surface de Riemann compacte, et $L$ un fibré en droites sur $X$. Soit $h_{\infty,X}$ (resp. vers $h_{\infty,L}$) une
métrique intégrable sur $X$ (resp. $L$). On note par $\omega_{\infty,X}$ la forme kählérienne associée à $h_{\infty,X}$.
\begin{itemize}
\item On considère une décomposition de $(TX,h_{\infty,X})=\overline{G_1}_\infty\otimes
\overline{G_2}_\infty^{-1}$ en fibrés en droites admissibles, et soit $(h_{n,G_1})_{n\in \N} $ (resp.  $(h_{n,G_2})_{n\in \N} )$
une suite de métriques positives et $\cl$ qui converge uniformément vers $h_{\infty,G_1}$ (resp. $h_{\infty,G_1}$). On pose
$h_{n,X}:=h_{n,G_1}\otimes h_{n,G_2}^{-1}$ pour tout $n\in \N$, et  on note par $\omega_{n,X}$ la forme kählérienne associée pour tout
$n\in \N$.
\item  Soit $\overline{L}=(E_1,\vc_1)\otimes (E_2,\vc_2)^{{}^{-1}}$ une décomposition en fibrés admissibles. On considère
$(\vc_{E_i,n})_{n\in\mathbb{N}}$ une suite de métriques positives $C^\infty$ sur $E_i$ qui converge uniformément vers $\vc_i$, pour
$i=1,2$, et on pose $h_{n,L}:=h_{n,E_1}\otimes h_{n,E_2}^{-1}$ pour tout $n\in \N$.
 \end{itemize}
 Alors la suite double suivante:
\[
\Bigl(T\bigl((X,\omega_{n,X});(L,h_{m,L}) \bigr)\Bigr)_{n\in \N,m \in
\N},
\]
converge vers une limite finie qui ne dépend pas du choix des suites ci-dessus. On la note par
$T\bigl((X,\omega_{\infty,X});(L,h_{\infty,L}) \bigr)$.
\end{theorem}

Dans le paragraphe \eqref{CTE}, On montre à l'aide d'un contre exemple la non-validité du théorème \eqref{TAH} si l'on supprime l'hypothèse de positivité des termes de la  suite du théorème. Plus précisément, on va montrer le résultat suivant:

\begin{theorem} Pour toute   forme de Kähler $\omega_{\p^1}$ sur $\p^1$, et pour tout $c>0$, il existe une suite de métriques $\bigl(h_{c,\delta}\bigr)_\delta$ de classe $\cl$ convergeant uniformément vers la métrique canonique de $\mathcal{O}$ sur $\p^1$ telle que:
\[
\limsup_{\delta \mapsto 0}T\bigl((\p^1,\omega_{\p^1});(\mathcal{O},h_{c,\delta})\bigr) - T\bigl((\p^1,\omega_{\p^1}),(\mathcal{O},h_\infty)\bigr) \leq -2c^2.
 \]
\end{theorem}

La section \eqref{gtavt} est dédiée à l'extension de la notion de faisceaux cohérents métrisés aux métriques canoniques. On introduira la définition suivante:

\begin{definition}  Soit $X$ une variété torique lisse. Soit $\overline{\mathcal{F}}=(\mathcal{F},\overline{E}_\bullet\rightarrow \mathcal{F})$ un faisceau cohérent métrisé sur $X$, on dira que la métrique de $\mathcal{F}$ est intégrable (resp. canonique) si chaque terme de $\overline{E}_\bullet$ est une somme directe orthogonale de fibrés en droites munis de métriques intégrables (resp. de leur métriques canoniques), on le note $\overline{\mathcal{F}}^c$ lorsqu'on considère des métriques canoniques partout.
\end{definition}
On établit la proposition suivante:
\begin{proposition}
Tout fibré vectoriel équivariant sur une variété torique lisse admet une métrique canonique.
\end{proposition}
En suivant \cite{Burgos}, on étend la notion de métrique de Quillen associée à cette classe de métriques généralisée. On termina par énoncer un résultat comparant notre approche avec celle de \cite{Burgos} en dimension $1$, c'est l'objet du théorème \eqref{MQG}.\\

\noindent\textbf{Remerciements}: Cet article fait partie de ma thèse. Je tiens à remercier V.Maillot pour m'avoir proposé ce
sujet si riche, pour ses indications et
son encouragement. Je tiens aussi à remercier  J.I. Burgos pour ses conseils et ses remarques sur ce travail, en
particulier pour la remarque \eqref{remBurgos}, G.Freixas, X. Ma et D.Eriksson.

 \section{La métrique de Quillen et la torsion analytique holomorphe, un rappel}
Soit $(X,\omega)$ une variété kählérienne compacte et $\overline{E}$ un fibré hermitien de classe $\cl$ sur $X$. A cette donnée, on associe pour tout $q\geq 0$, un opérateur $\Delta_{\overline{E}}^{(0,q)}$ agissant sur $A^{(0,q)}(X,E)$. On sait que cet opérateur admet un spectre infini positif et que la fonction Zêta associée se prolonge analytiquement au voisinage de $0$, voir par exemple \cite[§ 9.6]{heat}.

On définit  la torsion analytique holomorphe en posant:
\[
 T\Bigl((X,\omega) ;\overline{E}\Bigr)=\sum_{q\geq 0}q(-1)^{q+1}\zeta_{\Delta^q_{\overline{E}}}'(0).
\]
et on  munit $\lambda(L)=\otimes_{q\geq 0}\det\bigl(H^q(X,L) \bigr)^{(-1)^q}$, le déterminant de cohomologie de $L$, de la métrique suivant:
\[
 h_Q:=h_{L^2}\exp\Bigl(T\bigl((X,\omega);\overline{E}\bigr) \Bigr)
\]

 appelée la métrique de Quillen, où $h_{L^2}$ est la métrique $L^2$ induite par $\omega$ et $h_E$. On rappelle que
  cette construction permet de définir l'image directe pour une submersion entre groupes
 de K-théorie arithmétique, voir \cite[propsition 3.1]{Rossler}.

 On dispose de formules appelées formules des anomalies donnant la variation de la métrique de Quillen lorsque la métrique varie sur $E$ ou sur $X$, voit   \cite[théorèmes 0.2, 0.3]{BGS1}. Lorsque la métrique varie sur $E$, alors on a:

\begin{equation}\label{anomalie1}
 \log h_{Q,(X,\omega),(E,\vc)} -\log h_{Q,(X,\omega),(E,\vc')}=
-\Bigl[\int_{X}\widetilde{\mathrm{\mathrm{ch}}}(E,\|\cdot\|,\|\cdot\|')Td(\overline{TX})\Bigr]^{(\dim_\C X)}\footnote{Si $\eta \in A(X)$, alors  on définit $\Bigl[\int_X \eta \Bigr]^{(d)}$ comme étant $\int_X \eta_d $ où $\eta_d$ est la $d$-ème partie graduée du $\eta$ dans $A(X)$, l'algèbre des  $(\ast,\ast)$-formes différentielles de classe $\cl$.}.
\end{equation}
 La variation associée au changement de métrique sur $X$, elle est donnée par:
\begin{equation}\label{anomalie2}
 \log h_{Q,(X,\omega),(E,\vc)} -\log h_{Q,(X,\omega'),(E,\vc)}=
-\Bigl[\int_{X}ch(E,\|\cdot\|)\widetilde{Td}(TX,h_X,h'_X)\Bigr]^{(\dim_\C X)}
\end{equation}
où $\widetilde{\mathrm{\mathrm{ch}}}(E,\|\cdot\|,\|\cdot\|')$ (resp. $\widetilde{Td}(TX,h_X,h'_X)$) est la classe de Bott-Chern associée à la suite
$0\rightarrow (E,\vc)\rightarrow (E,\vc')\rightarrow 0$ (resp. $0\rightarrow (TX,h_X)\rightarrow (TX,h_X')\rightarrow 0$ ) , et au
caractère $ch$ (resp. $Td$), voir
\cite{Character} pour la définition et les propriétés de la classe de Bott-Chern.

\section{Métriques admissibles}
Soit $X$ une variété complexe analytique et $\overline{L}=(L,\vc)$ un fibré en droites hermitien muni d'une métrique continue sur $L$.
\begin{definition}
On appelle premier courant de Chern de $\overline{L}$ et on note $c_1\bigl( \overline{L}\bigr)\in D^{(1,1)}(X)$ le courant défini localement par l'égalité:
\[
c_1\bigl(\overline{L}\bigr)=dd^c\bigl( -\log \|s\|^2\bigr),
\]
où $s$ est une section holomorphe locale et ne s'annulant pas du fibré  $L$.
\end{definition}

\begin{definition}
La métrique $\vc$ est dite positive si $c_1\bigl(L,\vc\bigr)\geq 0$.
\end{definition}
\begin{definition}
 La métrique $\vc$ est dite admissible s'il existe une famille $\bigl(\vc_n \bigr)_{n\in \N}$ de métriques positives de classe $\cl$  convergeant uniformément vers $\vc$ sur $L$. On appelle fibré admissible sur $X$ un fibré en droites holomorphe muni d'une métrique admissible sur $X$.
\end{definition}
On dira que $\overline{L}$ est un fibré en droites intégrable s'il existe $\overline{L}_1$ et $\overline{L}_2$ admissibles tels que
\[
 \overline{L}=\overline{L}_1\otimes \overline{L}_2^{-1}.
\]

\begin{example}
Soit $n\in \N^\ast$. On note par $\mathcal{O}(1)$ le fibré de Serre sur $\p^n$ et on le munit de la métrique définie pour toute section méromorphe de $\mathcal{O}(1)$ par:
\[
 \|s(x)\|_\infty=\frac{|s(x)|}{\max(|x_0|,\ldots,|x_n|)}.
\]
Cette  métrique est admissible.
\end{example}
En fait, c'est un cas particulier d'un résultat plus général combinant   la construction Batyrev et Tschinkel  sur une variété torique projective et  la construction de Zhang. Dans la première construction permet d'associer canoniquement à tout fibré en droites sur une variété torique projective complexe une métrique continue notée $\vc_{BT}$ et déterminée uniquement par la combinatoire de la variété, voir \cite[proposition 3.3.1]{Maillot} et \cite[proposition 3.4.1]{Maillot}. L'approche de Zhang est moins directe, elle utilise un endomorphisme équivariant (correspondant à la multiplication par $p$, un entier supérieur à 2) afin de construire par récurrence une suite de métriques qui converge uniformément vers une limite notée $\vc_{Zh,p}$ et qui, en plus, ne dépend pas  du choix de la métrique de départ, voir \cite{Zhang} ainsi que \cite[théorème 3.3.3]{Maillot}. Mais d'après \cite[théorème 3.3.5]{Maillot} on montre que
\[
 \vc_{BT}=\vc_{Zh,p}.
\]
Que l'on appelle la métrique canonique associée à $L$. Notons que lorsque $L$ n'est pas trivial, alors cette métrique  est non $\cl$.\\



\section{Généralisation de la torsion analytique aux fibrés  intégrables
  sur les variétés  kählériennes compactes }

Dans cette section, on étend la notion de torsion analytique holomorphe aux fibrés intégrables sur une variété kählérienne compacte.
Soit $\overline{L}=(L,\vc)$ un fibré intégrable. En considérant $(\vc_n)$, une suite de métriques convenablement choisie, on va
montrer, en utilisant  la formule des anomalies et la théorie de Bedford-Taylor,
 que  la suite formée par des  métriques de Quillen correspondantes, forme une suite de Cauchy.

 Rappelons le théorème suivant, qui sera utilisé dans la suite:

\begin{theorem}\label{BBBTTT} Soit $U$ un ouvert dans une variété analytique complexe et  soient $u_1,\ldots,u_q$ des fonctions plurisousharmoniques continues sur $U$. Soient $u_1^{(k)},\ldots,u_q^{(k)}$, $q$ suites  de fonctions plurisousharmoniques localement bornées sur $U$ et convergeant uniformément sur tout compact de $U$ vers $u_1,\ldots,u_q$ respectivement et $(T_k)_{k\in \N}$ une suite de courants positifs fermés convergeant faiblement vers $T$sur $U$. Alors:
\begin{equation}\label{bt1}
u_1^{(k)}(dd^c u_2^{(k)})\wedge \cdots (dd^c u_q^{(k)})\wedge T_k\xrightarrow[k\mapsto \infty]{}         u_1(dd^c u_2)\wedge \cdots (dd^c u_q)\wedge T
\end{equation}
\begin{equation}\label{bt2}
  \quad(dd^c u_1^{(k)})(dd^c u_2^{(k)})\wedge \cdots (dd^c u_q^{(k)})\wedge T_k \xrightarrow[k\mapsto \infty]{}   (dd^c u_1)(dd^c
u_2)\wedge \cdots (dd^c u_q)\wedge T
\end{equation}
au sens de la convergence faible des courants.
\end{theorem}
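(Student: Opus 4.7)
Je d\'emontrerais ce th\'eor\`eme par r\'ecurrence sur $q$, en \'etablissant simultan\'ement les deux assertions \eqref{bt1} et \eqref{bt2} \`a chaque rang. L'observation structurelle est que \eqref{bt2} se d\'eduit de \eqref{bt1} au m\^eme rang par int\'egration par parties, tandis que \eqref{bt1} au rang $q+1$ n\'ecessite la convergence faible du courant positif ferm\'e interm\'ediaire $S_k := (dd^c u_2^{(k)})\wedge\cdots\wedge(dd^c u_{q+1}^{(k)})\wedge T_k$, laquelle n'est autre que \eqref{bt2} au rang $q$ appliqu\'ee \`a $T_k$ et aux potentiels $u_2^{(k)},\ldots,u_{q+1}^{(k)}$.

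Pour l'initialisation $q=1$ de \eqref{bt1}, soit $\phi$ une forme test lisse \`a support compact $K\subset U$. On \'ecrit
\[
\langle u_1^{(k)} T_k - u_1 T, \phi\rangle = \langle T_k,(u_1^{(k)}-u_1)\phi\rangle + \langle T_k-T, u_1\phi\rangle.
\]
Le premier terme est major\'e par $\|u_1^{(k)}-u_1\|_{L^\infty(K)}$ multipli\'e par la masse de $T_k$ sur $K$ \'evalu\'ee contre $|\phi|$, masse qui reste uniform\'ement born\'ee en $k$ par positivit\'e et convergence faible ; il tend donc vers $0$. Pour le second, on utilise qu'un courant positif s'\'etend canoniquement en une mesure de Radon contre toute forme continue \`a support compact, et que la convergence faible des courants positifs se prolonge en convergence contre les fonctions continues (via r\'egularisation).

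Le pas inductif de \eqref{bt1} au rang $q$ vers \eqref{bt2} au rang $q$ est alors direct : les courants $S_k = (dd^c u_2^{(k)})\wedge\cdots\wedge(dd^c u_q^{(k)})\wedge T_k$ \'etant ferm\'es comme produits de courants positifs ferm\'es, on a pour toute forme test lisse $\phi$
\[
\langle (dd^c u_1^{(k)})\wedge S_k, \phi\rangle = \langle u_1^{(k)} S_k, dd^c\phi\rangle,
\]
et \eqref{bt1} au rang $q$ donne le passage \`a la limite. Pour passer de \eqref{bt2} au rang $q$ \`a \eqref{bt1} au rang $q+1$, on reproduit le sch\'ema du cas $q=1$ en rempla\c{c}ant $T_k$ par $S_k$, en invoquant \eqref{bt2} au rang $q$ pour la convergence faible de $S_k$ vers $S$.

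L'obstacle principal me semble r\'esider dans le contr\^ole uniforme en $k$ de la masse locale des courants interm\'ediaires $S_k$, \`a chaque \'etape de la r\'ecurrence. Cela rel\`eve des in\'egalit\'es de Chern--Levine--Nirenberg, qui majorent la masse d'un produit $(dd^c u_1^{(k)})\wedge\cdots\wedge(dd^c u_q^{(k)})\wedge T_k$ sur un compact par un produit de normes $L^\infty$ locales des potentiels $u_j^{(k)}$, multipli\'e par la masse de $T_k$ ; la bornitude locale uniforme des $u_j^{(k)}$ r\'esulte directement de leur convergence uniforme sur tout compact vers les fonctions continues $u_j$. Une fois cette estimation de masse \'etablie, l'extension par densit\'e aux fonctions continues et le passage \`a la limite se font selon un sch\'ema standard.
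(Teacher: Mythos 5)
L'article ne d\'emontre pas ce th\'eor\`eme~: sa preuve se r\'eduit \`a un renvoi \`a \cite{Demailly}. Votre sch\'ema --- r\'ecurrence sur $q$ alternant \eqref{bt1} et \eqref{bt2}, le passage de \eqref{bt1} \`a \eqref{bt2} \'etant essentiellement d\'efinitionnel puisque $(dd^c u)\wedge S:=dd^c(uS)$, et in\'egalit\'es de Chern--Levine--Nirenberg pour la borne uniforme des masses locales des courants interm\'ediaires --- est pr\'ecis\'ement l'argument standard de la th\'eorie de Bedford--Taylor dans le cas de la convergence localement uniforme vers des limites continues, tel qu'il figure dans la r\'ef\'erence cit\'ee, et il est correct.
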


\begin{proof}  cf. par exemple \cite{Demailly}.

\end{proof}

\begin{lemma}\label{expression}
Soit $E_1$ et $E_2$ deux fibrés en droites sur $X$. Soit $\vc_1$ et $\vc_1' $ $($resp. $\vc_2,\vc_2'$ $)$ deux métriques $\cl$ sur $E_1$ $($resp.  sur $E_2$  $)$.

On munit $ L:=E_1\otimes (E_2)^{-1}$  des métriques $\vc_L:= \vc_1\otimes (\vc_2)^{-1}$ et  $\vc'_L:=\vc'_1\otimes (\vc^{'}_2)^{-1}$.  Alors
$\widetilde{\mathrm{\mathrm{ch}}}(L,\vc_L,\vc_L')$ est une somme linéaire de termes de la forme$:$
\begin{equation}\label{terme}
\Bigl(\log\bigl(\vc_1\otimes \vc'_2 \bigr)^2-\log\bigl(\vc'_1\otimes \vc_2\bigr)^2   \Bigr) c_1(\overline{E}_{1})^i c_1(\overline{E}_{1}')^jc_1(\overline{E}_{2})^kc_1(\overline{E}_{2}')^l
,\end{equation}
avec $(i,j,k,l)\in \N^4$.
\end{lemma}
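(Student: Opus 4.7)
L'id\'ee est d'\'ecrire une formule explicite pour $\widetilde{\mathrm{ch}}(L,\vc_L,\vc'_L)$ dans le cas d'un fibr\'e en droites, puis de la d\'evelopper par rapport \`a la d\'ecomposition $L = E_1 \otimes E_2^{-1}$. Le point de d\'epart est que, pour un fibr\'e en droites $L$ muni de deux m\'etriques $h,h'$ de classe $\cl$, la classe de Bott-Chern associ\'ee au caract\`ere de Chern admet la repr\'esentation explicite
$$
\widetilde{\mathrm{ch}}(L,h,h') = \bigl(\log h^2 - \log h'^2\bigr) \sum_{n\geq 0}\frac{1}{(n+1)!}\sum_{k=0}^{n} c_1(L,h)^k\, c_1(L,h')^{n-k},
$$
la somme \'etant finie modulo les degr\'es sup\'erieurs \`a $2\dim_\C X$. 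Cette identit\'e d\'ecoule directement des axiomes de la construction de Bott-Chern, par exemple via l'interpolation affine $h_t = (1-t)h + th'$ suivie d'une int\'egration sur $t$, cf.\ \cite{Character}.

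Ensuite, j'utiliserais l'additivit\'e des formes de Chern et la multiplicativit\'e des m\'etriques vis-\`a-vis du produit tensoriel, qui donnent
$$
c_1(L,\vc_L) = c_1(\overline{E}_1) - c_1(\overline{E}_2), \qquad c_1(L,\vc'_L) = c_1(\overline{E}_1') - c_1(\overline{E}_2'),
$$
ainsi que la r\'e\'ecriture purement alg\'ebrique
$$
\log \vc_L^2 - \log (\vc'_L)^2 = \bigl(\log \vc_1^2 + \log (\vc'_2)^2\bigr) - \bigl(\log (\vc'_1)^2 + \log \vc_2^2\bigr) = \log\bigl(\vc_1 \otimes \vc'_2\bigr)^2 - \log\bigl(\vc'_1 \otimes \vc_2\bigr)^2.
$$
En substituant ces expressions dans la formule pr\'ec\'edente et en d\'eveloppant chaque $c_1(L,\vc_L)^k c_1(L,\vc'_L)^{n-k}$ par la formule du bin\^ome de Newton, chaque produit s'exprime comme combinaison lin\'eaire finie de mon\^omes $c_1(\overline{E}_1)^i c_1(\overline{E}_1')^j c_1(\overline{E}_2)^k c_1(\overline{E}_2')^l$, ce qui fournit exactement la forme \eqref{terme}.

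Le seul point qui demande un peu de soin est l'obtention et la v\'erification de la formule explicite pour $\widetilde{\mathrm{ch}}$ d'un fibr\'e en droites; le reste n'est qu'une manipulation alg\'ebrique directe. Comme toutes les m\'etriques consid\'er\'ees sont de classe $\cl$, les formes de Chern et les logarithmes qui apparaissent sont des formes diff\'erentielles lisses bien d\'efinies, et l'identit\'e s'entend dans l'alg\`ebre $A(X)$.
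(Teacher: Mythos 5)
Your argument is correct, and it is essentially the paper's argument with the black box opened: the paper's entire proof is the citation of \cite[proposition 4.1]{Mounir2}, which is precisely the explicit line-bundle Bott--Chern formula you write down, so you are supplying the computation the paper outsources. The representative $\widetilde{\mathrm{ch}}(L,h,h')=(\log h^2-\log h'^2)\sum_{n\geq 0}\tfrac{1}{(n+1)!}\sum_{k=0}^{n}c_1(L,h)^k c_1(L,h')^{n-k}$ is the standard one, and your two algebraic observations (additivity of $c_1$ under tensor product and the regrouping of the logarithmic prefactor into $\log(\vc_1\otimes\vc_2')^2-\log(\vc_1'\otimes\vc_2)^2$) together with the binomial expansion do yield exactly the shape \eqref{terme}. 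One detail to fix: the path $h_t=(1-t)h+th'$ is not the one that produces this formula, since along it $c_1(L,h_t)=-dd^c\log\bigl((1-t)\|s\|_h^2+t\|s\|_{h'}^2\bigr)$ is not an affine combination of $c_1(L,h)$ and $c_1(L,h')$. The correct interpolation is the multiplicative one, $\log\|s\|_{h_t}^2=(1-t)\log\|s\|_h^2+t\log\|s\|_{h'}^2$, for which $c_1(L,h_t)=(1-t)c_1(L,h)+tc_1(L,h')$ and the transgression integral $\int_0^1\bigl(\log\|s\|_h^2-\log\|s\|_{h'}^2\bigr)e^{c_1(L,h_t)}\,dt$ gives your sum via $\int_0^1\binom{n}{j}(1-t)^jt^{n-j}\,dt=\tfrac{1}{n+1}$. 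Alternatively, since the Bott--Chern class is only needed modulo $\mathrm{Im}\,\partial+\mathrm{Im}\,\overline{\partial}$ (it is integrated against the closed form $Td(\overline{TX})$ in the sequel), it suffices to check directly that your expression satisfies the defining $dd^c$-equation, which follows from the telescoping identity $a^{n+1}-b^{n+1}=(a-b)\sum_{k=0}^{n}a^kb^{n-k}$ applied to $a=c_1(L,h)$, $b=c_1(L,h')$.
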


\begin{proof}
 C'est une conséquence directe du \cite[proposition 4.1]{Mounir2}.
\end{proof}

Soit  $\overline{L}$ un fibré en droites intégrable sur $X$. Soit     $(E_1,\vc_1)\otimes (E_2,\vc_2)^{{}^{-1}}$ une décomposition de $\overline{L}$ en fibrés admissibles. Par définition, il existe  $\bigl(\vc_{i,n}\bigr)_{n\in\mathbb{N}}$ une suite de métriques positives $C^\infty$ sur $E_i$ qui converge uniformément vers $\vc_i$ sur $X$, pour $i=1,2$.\\

 On pose, pour tout $(i,j,k,l)\in \N^4$,  $(n,n')\in \N^2 $ et $(m,m')\in \N^2 $: \[\begin{split}
&T_{(n,n'),(m,m')}^{i,j,k,l}(\cdot,\cdot):=\\
\bigl(\log(\vc_{{}_{1,n}}\otimes &\vc_{{}_{2,m'}} )^{{}^2}-\log(\vc_{{}_{1,n'}}\otimes \vc_{{}_{2,m}})^{{}^2}   \bigr) c_1(\overline{E}_{{}_{1,n}})^i c_1(\overline{E}_{{}_{1,n'}})^jc_1(\overline{E}_{{}_{2,m}})^kc_1(\overline{E}_{{}_{2,m'}})^l,
\end{split}
\]
où l'on a choisit implicitement des sections locales holomorphes de $E_1$ et $E_2$ de façon à ce que la forme ci-dessus soit définie sur $X$ entier. \\


On se propose de montrer que
\[
\Biggl[ \int_X\widetilde{\mathrm{\mathrm{ch}}}\bigl(E_1\otimes E_2^{-1},h_{{}_{1,n}}\otimes h_{{}_{2,m}}^{-1},  h_{{}_{1,n'}}\otimes h_{{}_{2,m'}}^{-1} \bigr)Td(\overline{TX})\Biggr]^{(dim_\C X)},\]tends vers zéro lorsque $n,n',m$ et $m$ tendent vers $\infty$.\\

Par compacité de $X$, \cite[proposition 4.1]{Mounir2} et \eqref{expression}, il existe un ensemble fini $\Omega$,   $\bigl(U_\al \bigr)_{\al\in \Omega}$  un recouvrement ouvert de $X$ et  $\bigl(s_{\al,1}\bigr)_{\al\in \Omega}$ (resp. $\bigl(s_{\al,2}\bigr)_{\al\in \Omega}$) un ensemble de sections locales  holomorphes de $E_1$ (resp. de $E_2$) avec que $s_{\al,1}$ (resp. $s_{\al,2}$) soit non nulle sur $U_\al,\; \forall \al\in \Omega$ tels que pour $\al \in \Omega$, la classe de Bott-Chern $\widetilde{\mathrm{ch}}\bigl(E_1\otimes E_2^{-1},h_{{}_{1,n}}\otimes h_{{}_{2,m}}^{-1},  h_{{}_{1,n'}}\otimes h_{{}_{2,m'}}^{-1} \bigr)$ soit donnée sur $U_\al$ par une combinaison linéaire en:

\[
 T_{(n,n'),(m,m')}^{i,j,k,l}\bigl(s_{\al,1},s_{\al,2}\bigr),
\]
qu'on rappelle égal à:
\begin{align*}
&T_{(n,n'),(m,m')}^{i,j,k,l}(s_{\al,1},s_{\al,2}):=\\
\Bigl(\log(\|s_{1,\al}\|_{{}_{1,n}}\otimes &\|s_{2,\al}\|_{{}_{2,m'}} )^{{}^2}-\log(\|s_{1,\al}\|_{{}_{1,n'}}\otimes \|s_{2,\al}\|_{{}_{2,m}})^{{}^2}   \Bigr) c_1(\overline{E}_{{}_{1,n}})^i c_1(\overline{E}_{{}_{1,n'}})^jc_1(\overline{E}_{{}_{2,m}})^kc_1(\overline{E}_{{}_{2,m'}})^l \; \text{sur}\; U_\al,\, \forall\,\al\in \Omega.
\end{align*}

On considère  $\bigl(\rho_\al\bigr)_{\al\in \Omega}$, une partition de l'unité subordonnée au recouvrement  $\bigl(U_\al \bigr)_{\al\in \Omega}$, c'est à dire que
\begin{enumerate}
\item $\forall \al\in \Omega$, $\rho_\al$ est une fonction réelle de classe $\cl$ sur $X$ à support inclus dans $U_\al$ et à valeurs dans $[0,1]$. 
\item $\sum_{\al\in \Omega}\rho_\al(x)=1$, $\forall x\in X$.
\end{enumerate}
On a donc,
\begin{align*}
 \int_X\widetilde{\mathrm{ch}}\bigl(E_1\otimes E_2^{-1},h_{{}_{1,n}}\otimes h_{{}_{2,m}}^{-1},  h_{{}_{1,n'}}\otimes h_{{}_{2,m'}}^{-1} \bigr)Td(\overline{TX})&=  \sum_{\al\in \Omega}\int_X\rho_\al\,\widetilde{\mathrm{ch}}\bigl(E_1\otimes E_2^{-1},h_{{}_{1,n}}\otimes h_{{}_{2,m}}^{-1},  h_{{}_{1,n'}}\otimes h_{{}_{2,m'}}^{-1} \bigr)Td(\overline{TX})\\
&=\sum_{\al\in \Omega}\int_{U_\al}\rho_\al\,\widetilde{\mathrm{ch}}\bigl(E_1\otimes E_2^{-1},h_{{}_{1,n}}\otimes h_{{}_{2,m}}^{-1},  h_{{}_{1,n'}}\otimes h_{{}_{2,m'}}^{-1} \bigr)Td(\overline{TX}).
\end{align*}
Dans $A(X)=\oplus_{p\in \N}A^{(p,p)}(X)$, on écrit
\[
 Td(\overline{TX})=\sum_{r\geq 0}t_r,
\]
où $t_r\in A^{(r,r)}(X)$, $\forall r\geq 0$.

 Fixons maintenant $\al\in \Omega$, on a
\begin{align*}
 \int_{U_\al}\rho_\al\,\widetilde{\mathrm{ch}}\bigl(E_1\otimes E_2^{-1},h_{{}_{1,n}}\otimes h_{{}_{2,m}}^{-1},  h_{{}_{1,n'}}\otimes
 h_{{}_{2,m'}}^{-1} \bigr)&Td(\overline{TX})=\int_{U_\al}\sum_{r\geq 0}\rho_\al\,\widetilde{\mathrm{ch}}\bigl(E_1\otimes
 E_2^{-1},h_{{}_{1,n}}\otimes h_{{}_{2,m}}^{-1},  h_{{}_{1,n'}}\otimes h_{{}_{2,m'}}^{-1} \bigr)t_r\\
&=\sum_{r\geq 0}\int_{U_\al}\rho_\al\,\widetilde{\mathrm{ch}}\bigl(E_1\otimes E_2^{-1},h_{{}_{1,n}}\otimes h_{{}_{2,m}}^{-1},
h_{{}_{1,n'}}\otimes h_{{}_{2,m'}}^{-1} \bigr)t_r\\
&=\sum_{r\geq 0}\int_{U_\al}\widetilde{\mathrm{ch}}\bigl(E_1\otimes E_2^{-1},h_{{}_{1,n}}\otimes h_{{}_{2,m}}^{-1},  h_{{}_{1,n'}}\otimes
h_{{}_{2,m'}}^{-1} \bigr)\bigl(\rho_\al t_r\bigr).
\end{align*}
Sur $U_\al$, on a les suites de  fonctions suivantes $ \Bigl(-\log(\|s_{1,\al}\|_{{}_{1,n}})\Bigr)_{n\in \N},
\Bigl(-\log\bigl(\|s_{1,\al}\|_{{}_{1,n'}}\bigr)\Bigr)_{n'\in \N}\, \Bigl(-\log(\|s_{2,\al}\|_{{}_{2,m}})\Bigr)_{m\in \N},\quad $ et
$ \Bigl(-\log(\|s_{2,\al}\|_{{}_{2,m'}})\Bigr)_{m'\in \N}$ restreintes à $U_\al$ vérifient les hypothèses du théorème \eqref{BBBTTT}.
 En remarquant que:
\[
 \rho_\al t_r\in A_c(U_\al)\footnote{ $A_c(U_\al)$ désigne l'ensemble des forme différentielles sur $U_\al$ à support compact}\quad \forall\, r.
\]
On déduit que:
\[
\forall i,j,k,l,\quad \biggl[T_{(n,n'),(m,m')}^{i,j,k,l}\bigl(s_{\al,1},s_{\al,2}\bigr)\Bigl(\rho_\al t_r \Bigr)\biggr]^{(\dim_\C X)}\xrightarrow[n,n',m,m'\mapsto \infty]{} 0,
\]
lorsque $n,n',m$ et $m'$ tendent vers $\infty$. En particulier, on obtient à l'aide du lemme \eqref{expression}:
\[
\Biggl[\int_{U_\al}\widetilde{\mathrm{ch}}\bigl(E_1\otimes E_2^{-1},h_{{}_{1,n}}\otimes h_{{}_{2,m}}^{-1},  h_{{}_{1,n'}}\otimes h_{{}_{2,m'}}^{-1} \bigr)\bigl(\rho_\al t_r\bigr)\Biggr]^{(\dim_\C X)}\xrightarrow[n,n',m,m'\mapsto \infty]{}0.
\]
On conclut que
\[
 \biggl[\int_X\widetilde{\mathrm{ch}}\bigl(E_1\otimes E_2^{-1},h_{{}_{1,n}}\otimes h_{{}_{2,m}}^{-1},  h_{{}_{1,n'}}\otimes h_{{}_{2,m'}}^{-1} \bigr)Td(\overline{TX})\biggr]^{(\dim_\C X)},
\]
tends vers $0$, lorsque $n,n',m$ et $m'$ tendent vers $\infty$.\\

Maintenant, on suppose que $(G_1,\vc_{G_1})\otimes (G_2,\vc_{G_2})^{{}^{-1}}$ est une autre décomposition   de $\overline{L}$ en fibrés admissibles. Remarquons que $ E_1\otimes G_2=E_2\otimes G_1$ et qu'il est muni de deux métriques $\vc_{{}_{E_1}}\otimes \vc_{{}_{G_2}}$ et $\vc_{{}_{G_1}}\otimes \vc_{{}_{E_2}}$.

 Pour $i=1,2$, on considère  $\bigl(\vc_{E_i,n}\bigr)_{n\in \N}$ (resp. $\bigl(\vc_{G_i,n}\bigr)_{n\in \N}$) une suite de métriques positives de classe $\cl$ convergeant uniformément vers $\vc_{E_i}$ (resp. vers $\vc_{G_i}$) sur $X$. D'après \eqref{expression}, \[\widetilde{\mathrm{ch}}\bigl(L, \vc_{E_{1,n}}\otimes \vc_{E_{2,n'}}^{-1}, \vc_{G_{1,m}}\otimes \vc_{G_{2,m'}}^{-1}\bigr),\] est une combinaison linéaire des termes de la forme suivante:
\[
\Bigl(\log\bigl(\vc_{E_1,n}\otimes \vc_{G_2,m'} \bigr)^2-\log\bigl(\vc_{G_1,m}\otimes \vc_{E_2,n'}\bigr)^2   \Bigr) c_1(\overline{E}_{n,1})^i c_1(\overline{E}_{2,n'})^jc_1(\overline{G}_{1,m})^kc_1(\overline{G}_{2,m'})^l.\]
Si l'on considère $\eta$ une forme différentielle de degré $(\dim_\C X-p ,\dim_\C X-p)$,
avec $p=i+j+k+l$, sur $U_\al$ à support  compact. Alors par le théorème \eqref{BBBTTT}, on a:
\[
\biggl[\int_{U_\al} \Bigl(\log\bigl(\vc_{E_1,n}\otimes \vc_{G_2,m'} \bigr)^2-\log\bigl(\vc_{G_1,m}\otimes \vc_{E_2,n'}\bigr)^2
\Bigr) c_1(\overline{E}_{n,1})^i c_1(\overline{E}_{2,n'})^jc_1(\overline{G}_{1,m})^kc_1(\overline{G}_{2,m'})^l \eta\biggr]^{(\dim_\C
X)},
\]
converge vers
\[
\biggl[\int_{U_\al} \Bigl(\log\bigl(\vc_{E_1}\otimes \vc_{G_2} \bigr)^2-\log\bigl(\vc_{G_1}\otimes \vc_{E_2}\bigr)^2   \Bigr) c_1(\overline{E}_1)^i c_1(\overline{E}_2)^jc_1(\overline{G}_1)^kc_1(\overline{G}_2)^l \eta\biggr]^{(\dim_\C X)},
 \]
 lorsque $n,n',m$ et $m'$ tendent vers l'infini. Or,
\[
 \overline{L}=(G_1,\vc_{G_1})\otimes (G_2,\vc_{G_2})^{{}^{-1}}=(E_1,\vc_{E_1})\otimes (E_2,\vc_{E_2})^{{}^{-1}},
\]
donc,
\[
\biggl[\int_{U_\al} \Bigl(\log\bigl(\vc_{E_1}\otimes \vc_{G_2} \bigr)^2-\log\bigl(\vc_{G_1}\otimes \vc_{E_2}\bigr)^2   \Bigr) c_1(\overline{E}_1)^i c_1(\overline{E}_2)^jc_1(\overline{G}_1)^kc_1(\overline{G}_2)^l \eta\biggr]^{(\dim_\C X)}=0.
 \]

On va appliquer ce résultat pour étendre la notion de torsion analytique holomorphe aux fibrés intégrables:
\begin{theorem}\label{TAH}Soit $X$ une variété  complexe kählérienne compacte de dimension $N$ muni d'une forme  de  Kähler $\omega$  et   $\overline{L}=(L,\|\cdot\|)$ un fibré en droites intégrable sur $X$.

 Pour toute décomposition de $\overline{L}=(E_1,\vc_1)\otimes (E_2,\vc_2)^{{}^{-1}}$ en fibrés admissibles et pour tout choix de $\bigl(\vc_{i,n}\bigr)_{n\in\mathbb{N}}$ une suite de métriques positives $C^\infty$ sur $E_i$ qui converge uniformément vers $\vc_i$ pour $i=1,2$, la suite double de métriques de Quillen:

\begin{equation}
\Bigl(h_{Q,{\footnotesize{(X,\omega);(E_1\otimes E^{-1}_2,\vc_{1,n}\otimes \vc^{-1}_{2,m})}}}\Bigr)_{n,m\in\mathbb{N}},
 \end{equation}

est convergente et  la limite ne dépend pas ni de la décomposition ni
de la suite choisie, on l'appellera la métrique de Quillen  généralisée et on la notera
 \[h_{Q,{\footnotesize{(X,\omega);(L,\vc)}}}.
\]

Si $H^q\bigl(X,L\bigr)=0$, pour tout $q\geq 1$, alors la suite suivante:

\begin{equation}
\Bigl(T\Bigl(\bigl(X,\omega\bigr);\bigl(E_1\otimes E^{-1}_2,\vc_{1,n}\otimes \vc^{-1}_{2,m}\bigr)\Bigl)\Bigr)_{n,m\in\mathbb{N}},
 \end{equation}
 converge vers une limite finie.
On l'appellera la \textit{torsion analytique holomorphe de Ray-Singer généralisée } et on la notera
 \[
T\bigl((X,\omega),(L,\vc)\bigr),
\]
 et on a pour toute métrique $\cl$,   $\vc'$  sur $L$:
 \[\begin{split}
T\bigl((X,\omega),\overline{L}\bigr)&=T\bigl((X,\omega),\overline{L}'\bigr)+\int_{X}\widetilde{\mathrm{ch}}\bigl(L,\vc,\|\cdot\|'\bigr)Td(\overline{TX})
 -\log\biggl(\frac{h_{L^2,(X,\omega),(L,\vc')}}{h_{L^2,(X,\omega),(L,\vc)}}\biggr),
\end{split}
\]
où   $\widetilde{\mathrm{ch}}\bigl(L,\vc,\|\cdot\|'\bigr)$ ici est une forme différentielle généralisée au sens de \cite[§ 4.3]{Maillot}.

\end{theorem}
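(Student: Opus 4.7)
Je propose de d\'eduire l'ensemble des assertions du th\'eor\`eme de la formule des anomalies \eqref{anomalie1} combin\'ee au travail pr\'eparatoire effectu\'e juste avant l'\'enonc\'e, qui \'etablit, via le th\'eor\`eme de Bedford-Taylor \eqref{BBBTTT} et le lemme \eqref{expression}, deux limites nulles. D'abord, pour montrer que la suite double $\bigl(h_{Q,(X,\omega);(L,\vc_{1,n}\otimes\vc_{2,m}^{-1})}\bigr)_{n,m}$ est de Cauchy, j'appliquerais \eqref{anomalie1} aux m\'etriques lisses $\vc_{1,n}\otimes\vc_{2,m}^{-1}$ et $\vc_{1,n'}\otimes\vc_{2,m'}^{-1}$: la diff\'erence $\log h_{Q,(n,m)}-\log h_{Q,(n',m')}$ s'\'ecrit alors comme l'int\'egrale d'une classe de Bott-Chern contre $Td(\overline{TX})$, dont on vient pr\'ecis\'ement de montrer qu'elle tend vers $0$ lorsque $n,n',m,m'\to\infty$. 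Pour traiter l'ind\'ependance vis-\`a-vis de la d\'ecomposition, j'appliquerais de nouveau \eqref{anomalie1} entre deux suites lisses approchant respectivement $(E_1,\vc_{E_1})\otimes(E_2,\vc_{E_2})^{-1}$ et $(G_1,\vc_{G_1})\otimes(G_2,\vc_{G_2})^{-1}$, et utiliserais l'identit\'e $\vc_{E_1}\otimes\vc_{G_2}=\vc_{G_1}\otimes\vc_{E_2}$ pour voir, toujours via \eqref{BBBTTT}, que la limite de l'int\'egrale est nulle, d'o\`u la co\"incidence des deux limites.

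Je traiterais ensuite la torsion comme suit: sous l'hypoth\`ese $H^q(X,L)=0$ pour tout $q\geq 1$, on a $\lambda(L)=\det H^0(X,L)$ et la m\'etrique de Quillen se factorise en $h_Q=h_{L^2}\exp(T)$. Il suffit donc de v\'erifier la convergence de la suite double des m\'etriques $L^2$: elle r\'esulte directement de la convergence uniforme sur $X$ de $\vc_{1,n}\otimes\vc_{2,m}^{-1}$ vers $\vc$, qui entra\^ine la convergence uniforme des int\'egrandes d\'efinissant le produit scalaire $L^2$ sur l'espace de dimension finie $H^0(X,L)$. La conclusion sur la torsion s'ensuit imm\'ediatement.

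Enfin, pour la formule de comparaison avec une m\'etrique lisse $\vc'$ sur $L$, je partirais de la formule des anomalies classique \'ecrite entre $\vc_{1,n}\otimes\vc_{2,m}^{-1}$ et $\vc'$, puis passerais \`a la limite: les termes $T(\vc_{n,m})$ et $\log(h_{L^2,(n,m)}/h_{L^2,\vc'})$ convergent par ce qui pr\'ec\`ede, et \eqref{BBBTTT} garantit que l'int\'egrale de $\widetilde{\mathrm{ch}}$ tend vers la forme diff\'erentielle g\'en\'eralis\'ee au sens de \cite[\S~4.3]{Maillot}. L'obstacle principal de ce sch\'ema est en r\'ealit\'e d\'ej\`a franchi en amont de l'\'enonc\'e, \`a savoir la convergence des int\'egrales de classes de Bott-Chern sous la seule hypoth\`ese de convergence uniforme des potentiels locaux; l'assemblage via la formule des anomalies est alors routinier.
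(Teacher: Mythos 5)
Votre sch\'ema co\"incide pour l'essentiel avec la preuve du papier : celle-ci ancre la suite \`a une m\'etrique lisse auxiliaire fixe $\vc'$ via \eqref{anomalie1} (ce qui est \'equivalent, par soustraction, \`a votre comparaison directe entre les termes $(n,m)$ et $(n',m')$), invoque exactement le travail pr\'eparatoire de Bedford--Taylor et le lemme \eqref{expression} pour la nullit\'e des limites d'int\'egrales de classes de Bott--Chern et pour l'ind\'ependance de la d\'ecomposition, puis traite la m\'etrique $L^2$ par convergence des matrices de Gram sur $H^0(X,L)$ sous l'hypoth\`ese $H^q(X,L)=0$. La proposition est donc correcte et suit la m\^eme route que le texte.
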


\begin{proof} Soit $\bigl(L,\vc\bigr)$ un fibré en droites intégrable sur $X$. Soient $\overline{E}_1$ et $\overline{E}_2$ deux
fibrés en droites admissibles tels que $\overline{L}=\overline{E}_1\otimes \overline{E}_2^{-1}$.
 On pose $\vc_n:=\vc_{1,n}\otimes\vc_{2,n}^{-1}$ pour tout $n\in \N $ où $\bigl(\vc_{1,n}\bigr)_{n\in \N}$ (resp.
 $\bigl(\vc_{2,n}\bigr)_{n\in \N}$) est une suite de métriques positives $\cl$ sur $E_1$ (resp. $E_2$) qui converge uniformément
 vers $\vc_{E_1}$ (resp. $\vc_{E_2}$).

 Si l'on  considère $\vc'$  une métrique $\cl$ quelconque sur $L$, alors d'après \eqref{anomalie1}, on a:

\[
 \log h_{Q, (X,\omega);(L,\vc_n)} -\log h_{Q,(X,\omega);(L,\vc')}=-\Bigl[\int_{X}\widetilde{\mathrm{ch}}\bigl(E_1\otimes E_2^{-1},\vc_n,\|\cdot\|'\bigr)Td(\overline{TX})\Bigr]^{(\dim_\C X)}.
\]
Or, on a montré que le terme à droite converge vers une limite finie qui ne dépend ni du choix de la suite ni de la décomposition. Par conséquent, la suite suivante:
\begin{align*}
\Bigl(-\log h_{\footnotesize{Q,(X,\omega),(L,\vc_n)}}\Bigr)_{n\in \N}=\Bigl(\Bigl[\int_{X}\widetilde{\mathrm{ch}}\bigl(E_1\otimes E_2^{-1},\vc_n,\|\cdot\|'\bigr)Td(\overline{TX})\Bigr]^{(\dim_\C X)}-\log h_{\footnotesize{Q,(X,\omega),(L,\vc')}}\Bigr)_{n\in\N},
\end{align*}
converge vers une limite qu'on note par $-\log\vc_{\footnotesize{Q,(X,\omega),(L,\vc)}}$. Si l'on considère
la  forme différentielle généralisée suivante $\widetilde{\mathrm{ch}}\bigl(L,\vc,\vc'\bigr)Td(\overline{TX})$, voir définition \cite[§
4.3]{Maillot}, alors  on dispose d'une formule d'anomlies généralisée en posant:
\[
\log
h_{\footnotesize{Q,(X,\omega),(L,\vc)}}-\log
h_{\footnotesize{Q,(X,\omega),(L,\vc')}}=-\Bigl[\int_{X}\widetilde{\mathrm{ch}}\bigl(L,\vc,\vc'\bigr)Td(\overline{TX})
\Bigr]^{(\dim_\C X)}.
\]

On suppose maintenant que
\begin{equation}\label{h=0}
 H^q\bigl(X,L\bigr)=0\quad \forall \, q\geq 1.
\end{equation}
donc,
\[
\la(L)=\det\bigl(H^0(X,L) \bigr).
\]
On va montrer que
\[
 \bigl(h_{L^2,(X,\omega),(L,\vc_n)}\bigr)_{n\in \N}\xrightarrow[n\mapsto \infty]{}h_{L^2,(X,\omega),(L,\vc)},
\]
ce qui nous permettra de déduire que la suite suivante converge:
\[\begin{split}
\Bigl(T\bigl((X,\omega),\overline{L}'\bigr)+\int_{X}\widetilde{\mathrm{ch}}\bigl(E_1\otimes E_2^{-1},\vc_n,\|\cdot\|'\bigr)Td(\overline{TX})
-\log\bigl(\frac{h_{L^2,(X,\omega),(L,\vc')}}{h_{L^2,(X,\omega),(L,\vc_n)}}\bigr)\Bigr)_{n\in \N}.
\end{split}
\]

 On définit  alors \textit{la torsion analytique holomorphe généralisée} d'un fibré en droites intégrable $\overline{L}$, vérifiant
 l'hypothèse \eqref{h=0},  sur $X$ munie d'une métrique kählérienne $\omega$, en posant:\[\begin{split}
T\bigl((X,\omega),\overline{L}\bigr)&:=T\bigl((X,\omega),\overline{L}'\bigr)+\lim_{n\mapsto\infty}\int_{X}\widetilde{\mathrm{ch}}\bigl(E_1\otimes E_2^{-1},\vc_n,\|\cdot\|'\bigr)Td(\overline{TX})\\
& -\lim_{n\mapsto\infty}\log\biggl(\frac{h_{L^2,(X,\omega),(L,\vc')}}{h_{L^2,(X,\omega),(L,\vc_n)}}\biggr),
\end{split}
\]
 et on peut  vérifier que
\[
T\bigl((X,\omega),\overline{L}\bigr)=T\bigl((X,\omega),\overline{L}'\bigr)+\int_{X}\widetilde{\mathrm{ch}}\bigl(L,\vc,\|\cdot\|'\bigr)Td(\overline{TX})
 -\log\biggl(\frac{h_{L^2,(X,\omega),(L,\vc')}}{h_{L^2,(X,\omega),(L,\vc_n)}}\biggr).
\]

Soit $\bigl(\vc_n \bigr)_{n\in \N}$ une suite de métriques continues qui converge uniformément vers $\vc$ sur $L$. Rappelons que si  $s$ et $t$ deux deux sections globales de $L$ alors
\[
 \bigl( s,t\bigr)_{L^2,n}=\int_X h_n\bigl( s,t\bigr)\omega,
\]
Par polarisation, on se ramène  à  $s=t$.  Comme  la suite $\bigl(\vc_n\bigr)_{n\in \N}$ converge uniformément vers $\vc$, on peut trouver pour tout $\eps$, un entier $N\in \N$ qui ne dépend pas de $s$ tel que
\[
(1-\eps)\bigl(s,s\bigr)_{L^2,\infty}\leq \bigl(s,s)_{L^2,n}\leq (1+\eps)\bigl(s,s\bigr)_{L^2,n}\quad \forall\, n\geq  N,
\]
où on a noté par $\bigl(\cdot,\cdot\bigr)_{L^2,\infty}$ la norme $L^2$ associée à $\vc$ et à $\omega$.

On déduit que la suite de matrices suivante:
\[
 \Bigl(\Bigl(\bigl(s_j,s_k \bigr)_{L^2,n} \Bigr)_{1\leq k,l\leq r}\Bigr)_{n\in \N},\footnote{$r:=\dim_\C H^0(X,L)$.}
\]
converge vers $\Bigl(\bigl(s_j,s_k \bigr)_{L^2,\infty} \Bigr)_{1\leq k,l\leq \dim_\C H^0(X,L)}$ pour une norme matricielle arbitraire et $\bigl\{s_1,s_2,\ldots,s_r\bigr\}$ est une base de $H^0(X,L)$. Donc,
\[
\Bigl(\det\bigl(s_j,s_k \bigr)_{L^2,n} \Bigr)_{1\leq k,l\leq r}\Bigr)_{n\in \N}\xrightarrow[n\mapsto \infty]{}\det\Bigl(\bigl(s_j,s_k \bigr)_{L^2,\infty}\Bigr)_{1\leq k,l\leq r}.
\]
En particulier,
\[
\Bigl( h_{L^2,(X,\omega),(L,\vc_n)}\Bigr)_{n\in \N}\xrightarrow[n\mapsto \infty]{}h_{L^2,(X,\omega),(L,\vc)}.
\]

 \end{proof}

Lorsque $X$ est une surface de Riemann compacte, alors on obtient un résultat plus général. En effet, on peut
considérer des métriques intégrables sur $X$ et sur $L$ et  on étend comme avant   la notion de métrique Quillen à cette situation:
\begin{theorem}\label{torsSurRiem}
Soit $X$ une surface de Riemann compacte, et $L$ un fibré en droites sur $X$. Soit $h_{\infty,X}$ (resp. vers $h_{\infty,L}$) une
métrique intégrable sur $X$ (resp. $L$). On note par $\omega_{\infty,X}$ la forme kählérienne associée à $h_{\infty,X}$.
\begin{itemize}
\item On considère une décomposition de $(TX,h_{\infty,X})=\overline{G_1}_\infty\otimes
\overline{G_2}_\infty^{-1}$ en fibrés en droites admissibles, et soit $(h_{n,G_1})_{n\in \N} $ (resp.  $(h_{n,G_2})_{n\in \N} )$
une suite de métriques positives et $\cl$ qui converge uniformément vers $h_{\infty,G_1}$ (resp. $h_{\infty,G_1}$). On pose
$h_{n,X}:=h_{n,G_1}\otimes h_{n,G_2}^{-1}$ pour tout $n\in \N$, et  on note par $\omega_{n,X}$ la forme kählérienne associée pour tout
$n\in \N$.
\item  Soit $\overline{L}=(E_1,\vc_1)\otimes (E_2,\vc_2)^{{}^{-1}}$ une décomposition en fibrés admissibles. On considère
$(\vc_{E_i,n})_{n\in\mathbb{N}}$ une suite de métriques positives $C^\infty$ sur $E_i$ qui converge uniformément vers $\vc_i$, pour
$i=1,2$, et on pose $h_{n,L}:=h_{n,E_1}\otimes h_{n,E_2}^{-1}$ pour tout $n\in \N$.
 \end{itemize}
 Alors la suite double suivante:
\[
\Bigl(T\bigl((X,\omega_{n,X});(L,h_{m,L}) \bigr)\Bigr)_{n\in \N,m \in
\N},
\]
converge vers une limite finie qui ne dépend pas du choix des suites ci-dessus. On la note par
$T\bigl((X,\omega_{\infty,X});(L,h_{\infty,L}) \bigr)$.
\end{theorem}

\begin{proof}
On procède comme avant, en utilisant la formule \eqref{anomalie2}.
\end{proof}

\subsection{Un contre exemple}\label{CTE}

On va montrer  à l'aide d'un contre exemple la non validité du  théorème \eqref{TAH}, si l'on supprime la condition de la positivité
, en particulier,  la métrique de Quillen  généralisée considérée comme fonction en la métrique n'est pas
continue sur l'espace des métriques intégrables muni de la topologie de la convergence uniforme. \\

 Pour simplifier, on suppose que $X=\mathbb{P}^1$ et que  $L$ est le fibré trivial. On peut adapter notre exemple au cas
 d'une variété kählérienne compacte quelconque. Soient $c>0$, $0<\varepsilon\ll1$, $0<\delta\ll\varepsilon$ et $0<\gamma\ll\varepsilon-\delta  $. On pose $f$ la fonction définie sur $[1-\eps,1-\eps+\delta]  \cup[1-\gamma,1+\gamma]\cup[1+\eps-\delta,1+\eps]$ par:
\begin{equation*}
f(r) = \left\{
\begin{array}{rl}
\frac{c\sqrt{\delta}}{\delta}r-\frac{c\sqrt{\delta}}{\delta}(1-\eps) & \text{si } r\in [1-\eps,1-\eps+\delta]\\
 c\sqrt{\delta}\quad \quad\quad \quad  \quad \quad  & \text{si } r\in [1-\gamma,1+\gamma]\\
-\frac{c\sqrt{\delta}}{\delta}r+\frac{c\sqrt{\delta}}{\delta}(1+\eps) & \text{si } r\in [1+\eps-\delta,1+\eps],\\
\end{array} \right.
\end{equation*}

et on recolle $f$ par des fonctions $\mathcal{C}^\infty$ de façon à obtenir une fonction qui soit  $\mathcal{C}^\infty$ qui coïncide avec $f$ sur $]1-\eps,1-\eps+\delta[  \cup]1-\gamma,1+\gamma[\cup]1+\eps-\delta,1+\eps[ $, à support compact, nulle en $0$ et qu'elle soit de norme sup inférieur à $ 2c\sqrt{\delta}$. On la note par $f_{c,\delta}$. (On peut supposer que que suite de fonction en $\delta$ est décroissante). On  étend  $f_{c,\delta}$ en   une fonction $\mathcal{C}^\infty$ sur $\mathbb{P}^1$ qu'on notera aussi par $f_{c,\delta}$ et on pose alors $\bigl(\mathcal{O},h_{c,\delta}\bigr)$ le fibré trivial hermitien muni de la métrique $h_{c,\delta}$ donnée par $h_{c,\delta}(1,1)=e^{-f_{\delta,c}}$. Puisque $\sup_{\p^1} |f_{c,\delta}|\leq 2c\sqrt{\delta}$, alors
 $(h_{c,\delta}){{}_{\delta}}$ est une suite croissante qui converge uniformément, lorsque  $\delta\mapsto 0$, vers $h_\infty$   (la métrique canonique de $\mathcal{O}$, c'est à dire $h_\infty(1,1)=1$).\\

Soit $\omega$ une forme kählérienne $\cl$ quelconque sur $\p^1$. On considère la métrique de Quillen associée à $h_{c,\delta}$ et à $\omega$. Par \eqref{anomalie1}, on a:
{\allowdisplaybreaks
\begin{align*}
  -T((\p^1,\omega);(\mathcal{O},h_{c,\delta})) +T((\p^1,\omega);(\mathcal{O},h_\infty))&=\int_{\mathbb{P}^1}
  \widetilde{\mathrm{ch}}(\mathcal{O},h_{c,\delta},h_\infty)Td(\overline{T{\mathbb{P}^1}})+
  \log\frac{h_{L^2,(\p^1,\omega),(\mathcal{O},h_{c,\delta})}}{h_{L^2,(\p^1,\omega),(\mathcal{O},h_\infty)}}\\
&=  \frac{1}{2}\int_{\mathbb{P}^1} f_{{}_{c,\delta}} c_1(\overline{T{\mathbb{P}^1}})+ \int_{\mathbb{P}^1} f_{{}_{c,\delta}} dd^c
f_{{}_{c,\delta}}+ \log\frac{h_{L^2,(\p^1,\omega),(\mathcal{O},h_{c,\delta})}}{h_{L^2,(\p^1,\omega),(\mathcal{O},h_\infty)}}\\
&=\int_{\mathbb{P}^1} f_{{}_{c,\delta}} dd^c f_{{}_{c,\delta}}+\frac{1}{2}\int_{\mathbb{P}^1} f_{{}_{c,\delta}}
c_1(\overline{T{\mathbb{P}^1}})+\log\frac{h_{L^2,(\p^1,\omega),(\mathcal{O},h_{c,\delta})}}{h_{L^2,(\p^1,\omega),(\mathcal{O}
,h_\infty)}}.
  \end{align*}}
 Par construction de $f_{c,\delta}$, on a
{\allowdisplaybreaks
\begin{align*}
\int_{\mathbb{P}^1} f_{{}_{c,\delta}} dd^c f_{{}_{c,\delta}}&=\int_{\mathbb{R}^+}f_{{}_{c,\delta}} \frac{1}{r}\frac{\partial}{\partial r}\Bigl(r\frac{\partial f_{{}_{c,\delta}}}{\partial r}\Bigr) r dr\\
&=\Bigl[rf_{{}_{c,\delta}}\frac{\partial f_{{}_{c,\delta}}}{\partial r}\Bigr]_0^{\infty} -\int_{\mathbb{R}^+} r\Bigl(\frac{\partial f_{{}_{c,\delta}}}{\partial r}\Bigr)^2 dr\\
&=-\int_A r\Bigl(\frac{\partial f_{{}_{c,\delta}}}{\partial r}\Bigr)^2dr- \int_{\mathbb{R}^+\setminus A} r\Bigl(\frac{\partial f_{{}_{c,\delta}}}{\partial r}\Bigr)^2dr\quad \text{où}\, A=[1-\eps,1-\eps+\delta]\cup [1+\eps-\delta,1+\eps] \\
&=-\frac{2}{\delta} f_{{}_{c,\delta}}(1)^2 -\int_{\mathbb{R}^+\setminus A} r\Bigl(\frac{\partial f_{{}_{c,\delta}}}{\partial r}\Bigr)^2dr\\
&= -2c^2-     \int_{\mathbb{R}^+\setminus A} r\Bigl(\frac{\partial f_{{}_{c,\delta}}}{\partial r}\Bigr)^2dr.
\end{align*}}
Donc,
\begin{align*}
-T((\p^1,\omega;(\mathcal{O},h_{c,\delta})) +T((\p^1,\omega;(\mathcal{O},h_\infty))\leq -2c^2 +\frac{1}{2}\int_{\mathbb{P}^1}
f_{{}_{c,\delta}}
c_1(\overline{T{\mathbb{P}^1}})+\log\frac{h_{L^2,(\p^1,\omega),(\mathcal{O},h_{c,\delta})}}{h_{L^2,(\p^1,\omega),(\mathcal{O}
,h_\infty)}}.
\end{align*}
 Comme $\sup_{\p^1}|f_{{}_{c,\delta}}|\leq 2c\sqrt{\delta}$, et que $\p^1$ est projectif, alors il existe une constante
 $M>0$ telle que $|\int_{\p^1}f_{{}_{c,\delta}}\,c_1(\overline{T{\mathbb{P}^1}})|\leq M \sqrt{\delta}\, c$\footnote{Il suffit de
 noter
 qu'il  existe $l\gg 1$ indépendant  de $c$ et de $\delta$, tel que $c_1(\overline{T\p^1})+l \,\omega_{FS}$ soit positif.},
 $\forall\,c>0$ et $\forall \,0<\delta\ll
 1$. Par construction,  $h_{c,\delta}\leq
 h_{\infty}$, alors on obtient:
\begin{equation}\label{nhjncj}
-T((\p^1,\omega);(\mathcal{O},h_{c,\delta})) +T((\p^1,\omega);(\mathcal{O},h_\infty))\leq -2c^2 +M\sqrt{\delta}c,\quad
\forall\,c>0\;\forall\, 0<\delta\ll 1.
\end{equation}

\begin{theorem} Pour toute   forme de kählérienne $\omega$, sur $\p^1$, et pour tout $c>0$, il existe une suite de métriques $\bigl(h_{c,\delta}\bigr)_\delta$ de classe $\cl$ convergeant uniformément vers la métrique canonique de $\mathcal{O}$ sur $\p^1$ telle que:
\[
\limsup_{\delta \mapsto 0}T\bigl((\p^1,\omega);(\mathcal{O},h_{c,\delta})\bigr) - T\bigl((\p^1,\omega_{\p^1}),(\mathcal{O},h_\infty)\bigr) \leq -2c^2.
 \]
\end{theorem}
 \begin{proof}
Par \eqref{nhjncj}, on a
\[
\limsup_{\delta\mapsto 0}T((\p^1,\omega);(\mathcal{O},h_{c,\delta})) - T((\p^1,\omega);(\mathcal{O},h_\infty))\leq -2c^2\quad \forall\, c>0.
\]
On en déduit que la suite $ \Bigl(T((\p^1,\omega);(\mathcal{O},h_{c,\delta}))\Bigr)_{\delta>0}$ ne converge pas vers $T((\p^1,\omega);(\mathcal{O},h_\infty))$ lorsque $\delta$ tends vers $0$.
\end{proof}

\begin{remarque}\leavevmode
\rm{
 \begin{enumerate}

 \item On a $h_{c,\delta}$ est invariante par l'action du tore compact de $\p^1$.
 \item
Malgré que la suite $ \Bigl(T((\p^1,\omega);(\mathcal{O},h_{c,\delta}))\Bigr)_{\delta>0}$ ne converge pas  vers  $T(\mathcal{O},h_\infty)$, on
notera  qu'il existe une constante $M'$ telle que  $ -T((\p^1,\omega);(\mathcal{O},h_{c,\delta}))\leq M'$, $\forall\, 0<\delta\ll 1
$ et $\forall \,c>0$. En effet, de \eqref{nhjncj} on déduit que:
\[
-T((\p^1,\omega);(\mathcal{O},h_{c,\delta})) \leq \frac{M^2}{8}-T((\p^1,\omega);(\mathcal{O},h_\infty))=:M' \quad \forall\, 0<\delta\ll 1,\,\forall\, c>0.
 \]
\end{enumerate}
}
\end{remarque}

On établit  ce fait en toute généralité, voir \cite[théorème 1.3]{Mounir6}. Plus précisément, on montre que La torsion analytique holomorphe vue comme fonction en la métrique est
minorée sur l'espace des
métriques intégrables  et invariantes par l'action du tore compact sur un fibré en droites équivariant  sur $\p^1$.

\subsection{Un calcul explicite de la torsion analytique généralisée dans le cas $X=\mathbb{P}^1$}

D'après le théorème \eqref{torsSurRiem}, ou voir  \cite[théorème 2.5]{Mounir2}, on peut considérer le torsion analytique généralisée
associée à $\overline{\mathcal{O}(m)}_\infty$ le fibré $\mathcal{O}(m)$ muni de sa métrique canonique, et $\p^1$ muni de
$\omega_\infty=\frac{i}{2\pi}\frac{dz\wedge d\z}{\max(1,|z|^4)}$.

En utilisant les formules d'anomalies et connaissant la valeur explicite de $T((\p^1,\omega_{FS});\overline{\mathcal{O}(m)}_{FS})$, on  calcule la torsion analytique généralisée  $T((\mathbb{P}^1,\omega_\infty);\overline{\mathcal{O}(m)}_\infty)$, où

\begin{proposition} On a pour tout entier $m\geq 1$:
\[
  T((\mathbb{P}^1,\omega_\infty\bigr);\overline{\mathcal{O}(m)}_\infty)          =4\zeta'_\Q(-1)-\frac{1}{6}+ \log\biggl(\frac{(m+2)^{m+1}}{\bigl((m+1)!\bigr)^2}\biggr).
\]
\end{proposition}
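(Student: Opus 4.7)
The plan is to reduce everything to the classical smooth Fubini--Study case $T((\p^1,\omega_{FS});\overline{\mathcal{O}(m)}_{FS})$, whose value is known in closed form (due to Weng/Gillet--Sould\`e, and recoverable from the spectrum of the Laplacian on $\p^1$ together with Hurwitz's formula for $\zeta'_\Q(-1)$), and then transfer to $\omega_\infty$ and $h_\infty$ by using the generalized anomaly formulas established in the proofs of Theorem~\eqref{TAH} and Theorem~\eqref{torsSurRiem}. Concretely, the plan is to write
\[
T\bigl((\p^1,\omega_\infty);\overline{\mathcal{O}(m)}_\infty\bigr) - T\bigl((\p^1,\omega_{FS});\overline{\mathcal{O}(m)}_{FS}\bigr) = A + B + C,
\]
where $A$ comes from the change of metric on $\mathcal{O}(m)$ at fixed K\"ahler form $\omega_\infty$ (formula \eqref{anomalie1}, extended to integrable metrics via Theorem~\eqref{TAH}), $B$ comes from the change of K\"ahler form at fixed metric $h_{FS}$ on $\mathcal{O}(m)$ (formula \eqref{anomalie2}, extended via Theorem~\eqref{torsSurRiem}), and $C$ is the logarithm of the ratio of the corresponding $L^2$-metrics on $\det H^0(\p^1,\mathcal{O}(m))$.

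Next I would compute each of these three terms explicitly in the affine chart $z = x_1/x_0$, exploiting the $\mathbb{S}^1$-invariance of all four data $(h_{FS},h_\infty,\omega_{FS},\omega_\infty)$ to reduce to one-dimensional integrals in the radial variable $r=|z|$. The secondary Bott--Chern forms in degree one are, by Lemma~\eqref{expression}, just multiples of $\log(h_\infty/h_{FS}) = m\log\bigl((1+|z|^2)/\max(1,|z|^2)\bigr)$ wedged with the relevant $(1,1)$-forms; since $\omega_\infty = \frac{i}{2\pi}\frac{dz\wedge d\z}{\max(1,|z|^4)}$ is piecewise smooth with a jump along $|z|=1$, the integrals split naturally into contributions from the disc $|z|\leq 1$ and its complement, which are elementary and yield rational numbers plus logarithms of factorials. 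For $C$, the Gram matrix of the monomial basis $\{1,z,\dots,z^m\}$ of $H^0(\p^1,\mathcal{O}(m))$ is diagonal for all four pairs $(h,\omega)$ considered, and its entries are Beta-type integrals whose ratios collapse to the closed form $\log\bigl((m+2)^{m+1}/((m+1)!)^2\bigr)$ appearing in the statement.

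Finally I would substitute the known value of $T((\p^1,\omega_{FS});\overline{\mathcal{O}(m)}_{FS})$ and check that the contributions in $m$ coming from the Fubini--Study torsion combine with $A$, $B$, $C$ to match the right-hand side of the proposition; the constant $4\zeta'_\Q(-1) - \frac{1}{6}$ is already present in the Fubini--Study formula and is not affected by the anomaly terms, which depend only algebraically on $m$. The main obstacle is purely computational, namely the careful bookkeeping of the piecewise-smooth integrands in $A$ and $B$ across $|z|=1$ (the locus where $h_\infty$ and $\omega_\infty$ fail to be $\cl$) and making sure the boundary contributions produced by integration by parts cancel; no new conceptual input is needed beyond the generalized anomaly identity already established for integrable metrics.
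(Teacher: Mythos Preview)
Your proposal is correct and follows exactly the approach the paper indicates: the text immediately preceding the proposition explicitly states that one uses the anomaly formulas together with the known explicit value of $T((\p^1,\omega_{FS});\overline{\mathcal{O}(m)}_{FS})$, and the paper's own proof consists solely of a reference to \cite[proposition 2.7]{Mounir2} where this computation is carried out. Your decomposition into $A$, $B$, $C$ (bundle-metric anomaly, K\"ahler-form anomaly, $L^2$-metric ratio) is the natural unpacking of that strategy, and the radial reduction via $\mathbb{S}^1$-invariance is the expected way to evaluate the integrals.
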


\begin{proof}
Voir la preuve de \cite[proposition 2.7]{Mounir2}.
\end{proof}

\begin{remarque}\label{remBurgos}
\rm{Lorsque  $\omega$ est une forme
kählérienne invariante par l'action de $\mathbb{S}^1$ et que la métrique de $\mathcal{O}(m)$ est admissible et invariante aussi
 par l'action du tore compact de $\p^1$, alors on  peut donner une expression pour $
 T((\mathbb{P}^1,\omega\bigr);\overline{\mathcal{O}(m)})$  en termes la transformée de Legendre-Fenchel. En fait, on peut
 exprimer les formules d'anomalies, dans ce cas, comme intégrales des transformées de Legendre-Fenchel associées aux
 métriques de $\p^1$ et de $\mathcal{O}(m)$, voir \cite{Burgos2} pour la définition de la transformée de Legendre-Fenchel associée
 à une métrique admissible invariante par l'action du tore compact. }
\end{remarque}
\section{Généralisation de la torsion analytique sur les variétés toriques lisses dans le formalisme de Burgos, Litcanu et
Freixas}\label{gtavt}
\subsection{Extension de la notion de métrique canonique aux faisceaux cohérents metrisés sur une variété torique lisse}

Soit $X$ une variété complexe. On étend la classe de fibrés vetoriels hermitiens de classe $\cl$ en considérant les fibrés hermitiens $\overline{E}$ qui s'écrivent sous la forme:
\[
 \overline{E}=\overline{E'}\oplus \oplus_{i=1}^d \overline{L}_i,
\]
où $\overline{E'}$ est un fibré hermitien de classe $\cl$ et $\overline{L}_1,\ldots,\overline{L}_d$ sont des fibrés en droites intégrables, on appellera $\overline{E}$ un fibré hermitien intégrable.\\

Dans cette partie on utilise la théorie du \cite[§ 4.3]{Maillot} pour étendre la notion de classes de Bott-Chern aux suite exactes de fibrés  hermitiens intégrables. Plus précisément, si $ch$ est le caractère de Chern et
\[
\overline{\eta}: 0\longrightarrow \overline{S}\longrightarrow \overline{E}\longrightarrow \overline{Q}\longrightarrow 0,
\]
est une suite exacte de fibrés hermitiens intégrables, alors on leur associe une  unique forme différentielle généralisée $\widetilde{\mathrm{ch}}(\overline{\eta})$ élément de $\widetilde{\overline{A}_g}^\ast(X)$, (voir \cite[p. 66]{Maillot}), qui vérifie les mêmes propriétés classiques de la classe de Bott-Chern, voir \cite{Character}.

Supposons que $\widetilde{\mathrm{ch}}(\overline{\eta})$ existe. On écrit $\overline{S}=\overline{S'}\oplus \overline{S''}$, $\overline{E}=\overline{E'}\oplus \overline{E''}$ et $\overline{Q}=\overline{Q'}\oplus \overline{Q''}$ où $\overline{S}$, $\overline{E'}$ et $\overline{Q'}$ sont des fibrés hermitiens de classe $\cl$ et que chacun des fibrés hermitiens $\overline{S''}$, $\overline{E''}$ et $\overline{Q''}$ est une somme orthogonale de fibrés en droites intégrables.

\[
\xymatrix{
\overline{\eta}:\;0\ar[r]&\overline{S} \ar[r] & \overline{E}  \ar[r]  & \overline{Q} \ar[r] &0\\
\overline{\eta_0}:\;0\ar[r]&\overline{S_0} \ar[r] \ar[u]^{id}  &\overline{E_0} \ar[r]\ar[u]^{id} & \overline{Q_0}\ar[u]^{id}\ar[r] &0
  }
\]
avec $\overline{S_0}=\overline{S'}\oplus \overline{S''_0}$, $\overline{E_0}=\overline{E'}\oplus \overline{E''_0}$ et $\overline{Q_0}=\overline{Q'}\oplus \overline{Q''_0}$ tels que $\overline{S''_0}$, $\overline{E''_0} $ et $\overline{Q''_0}$ sont munis de métriques hermitiennes de classes $\cl$. On a
\[
\widetilde{\mathrm{ch}}(\overline{\eta_0})-\widetilde{\mathrm{ch}}(\overline{\eta})=\widetilde{\mathrm{ch}}(\overline{C_1}\oplus \overline{C_3})-\widetilde{\mathrm{ch}}(\overline{C_2}),
\]
où $\overline{C_1}$, $\overline{C_2}$ et $\overline{C_3}$ désignent respectivement la première, la deuxième et la  troisième colonne du diagramme ci-dessus. On vérifie que
\[
 \widetilde{\mathrm{ch}}(\overline{C_1}\oplus \overline{C_3})=\widetilde{\mathrm{ch}}(\overline{C_1})+ \widetilde{\mathrm{ch}}(\overline{C_3})=\widetilde{\mathrm{ch}}(\overline{S''_0}\rightarrow  \overline{S''})+\widetilde{\mathrm{ch}}(\overline{Q''_0}\rightarrow  \overline{Q''}),
\]
 et
\[
 \widetilde{\mathrm{ch}}(\overline{C_2})=\widetilde{\mathrm{ch}}(\overline{E''_0}\rightarrow \overline{E''}).
\]
Le calcul de ces dernières classes se ramène au cas des fibrés vectoriels de rang 1. On déduit l'unicité et l'existence de la classe $\widetilde{\mathrm{ch}}(\overline{\eta})$. Pour les propriétés de fonctorialité, elles découlent de la théorie des formes différentielles généralisées de \cite[§ 4.3]{Maillot}.

Comme application, on peut étendre la notion de faisceaux cohérents métrisés de \cite{Burgos} pour qu'elle prend en compte les fibrés hermitiens intégrables. On dira que $\overline{\mathcal{F}}=\bigl(\mathcal{F},\overline{E}_\bullet\rightarrow \mathcal{F} \bigr)$ est un faisceau cohérent intégrable si chaque terme du complexe $\overline{E}_\bullet$ est un fibré hermitien intégrable, et on étend la définition de \cite{Burgos} pour la classe de Bott-Chern à notre situation.\\

Dans la suite, on s'intéresse aux variétés toriques. Le cas des variété toriques lisses est plus intéressant, puisqu'on montre dans
\cite{Klyachko}, que tout fibré vectoriel équivariant admet une résolution canonique par des fibrés vectoriels scindés.
 Rappelons d'abord la construction de la métrique canonique sur un fibré en droites équivariant. Soit $\X$ une variété torique
projective lisse sur $\mathrm{Spec}(\Z)$, on note par $T$ le tore associé. Soit $L$ un fibré en droites sur $\X$, on rappelle qu'on
 construit de manière unique une métrique sur $L(\C)$:
\begin{proposition}\label{metriquecanonique}
 Soit $L$ un fibré en droites sur $\X$. Il existe un diviseur horizontal $T$-invariant $D$ sur $\X$ et un isomorphisme:
\[
 \Phi:L\longrightarrow \mathcal{O}(D).
\]
 La métrique $\Phi^\ast \vc_{D,\infty}$ sur $L$ est indépendante des choix de $D$ et $\Phi$. On l'appelle métrique canonique sur $L$ et on la note $\vc_{L,\infty}$. On note $\overline{L}_\infty=\bigl(L,\vc_{D,\infty} \bigr)$ le fibré $L$ muni de sa métrique canonique.
\end{proposition}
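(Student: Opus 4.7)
Pour \'etablir cette proposition, je proc\'ederais en deux \'etapes: d'abord l'existence du couple $(D,\Phi)$, puis l'ind\'ependance de la m\'etrique tir\'ee en arri\`ere.

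Pour l'existence, je m'appuierais sur la classification classique des fibr\'es en droites sur les vari\'et\'es toriques lisses. Sur la fibre g\'en\'erique $\X_\Q$, tout fibr\'e en droites admet une structure $T$-\'equivariante (unique \`a torsion par un caract\`ere pr\`es) et est donc isomorphe \`a $\mathcal{O}(D_\eta)$ pour un certain diviseur $T$-invariant $D_\eta$. En prenant l'adh\'erence de Zariski $D$ de $D_\eta$ dans $\X$, on obtient un diviseur horizontal $T$-invariant, et l'isomorphisme $L_\Q\cong \mathcal{O}(D)_\Q$ s'\'etend \`a $\X$ tout entier quitte \`a ajuster $D$ par des composantes verticales, ce qui n'affecte pas la m\'etrique archim\'edienne recherch\'ee.

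Pour l'ind\'ependance, l'argument cl\'e repose sur la caract\'erisation dynamique de Zhang rappel\'ee plus haut (voir \cite[th\'eor\`eme 3.3.3]{Maillot}). Je fixerais un entier $p\geq 2$ et consid\'ererais l'endomorphisme $T$-\'equivariant $[p]:\X\to \X$; pour tout diviseur $T$-invariant $D$, il existe un isomorphisme canonique $[p]^\ast \mathcal{O}(D)\cong \mathcal{O}(D)^{\otimes p}$, et $\vc_{D,\infty}$ est alors caract\'eris\'ee comme l'unique m\'etrique continue sur $\mathcal{O}(D)(\C)$ invariante par le tore compact et fix\'ee par l'op\'erateur dynamique induit. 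Soient $(D,\Phi)$ et $(D',\Phi')$ deux choix admissibles: la compos\'ee $\Phi'\circ \Phi^{-1}:\mathcal{O}(D)\to \mathcal{O}(D')$ s'identifie \`a la multiplication par une fonction rationnelle $f$ dont le diviseur vaut $D'-D$; cette diff\'erence \'etant $T$-invariante, $f$ s'\'ecrit $c\cdot \chi^m$ o\`u $c$ est une constante et $\chi^m$ un caract\`ere du tore, et est en particulier $T$-\'equivariante. L'isomorphisme $\Phi'\circ \Phi^{-1}$ entrelace alors les endomorphismes dynamiques associ\'es \`a $\mathcal{O}(D)$ et \`a $\mathcal{O}(D')$, et transporte le point fixe de Zhang de l'un sur celui de l'autre; en prenant les images r\'eciproques par $\Phi$ et $\Phi'$, on obtient la m\^eme m\'etrique sur $L$.

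Le principal obstacle sera de contr\^oler la constante scalaire $c$ et de v\'erifier qu'elle est absorb\'ee de mani\`ere coh\'erente par la normalisation de l'isomorphisme $[p]^\ast \mathcal{O}(D)\cong \mathcal{O}(D)^{\otimes p}$. Deux approches me semblent possibles: d'une part utiliser directement le formalisme de \cite{Zhang}, en tirant parti du facteur $1/p$ qui efface asymptotiquement toute constante multiplicative au cours de l'it\'eration dynamique; d'autre part passer par la description combinatoire de Batyrev-Tschinkel, o\`u l'\'egalit\'e $\Phi^\ast \vc_{D,\infty}=(\Phi')^\ast \vc_{D',\infty}$ se ram\`ene \`a la compatibilit\'e imm\'ediate entre les fonctions d'appui des deux diviseurs $T$-invariants et le caract\`ere $\chi^m$.
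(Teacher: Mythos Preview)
The paper does not give a proof of this proposition: it simply writes ``Voir \cite[proposition 3.4.1]{Maillot}''. Your sketch is therefore not to be compared against an argument in the paper but rather viewed as a plausible reconstruction of the cited result, and as such it follows the same circle of ideas (Batyrev--Tschinkel combinatorial metric, Zhang's dynamical characterisation) that the paper invokes elsewhere.

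One substantive remark on your sketch: the ``principal obstacle'' you identify, namely the scalar constant $c$ in $f=c\cdot\chi^m$, is in fact not an obstacle here, and your first proposed fix (the $1/p$ contraction in Zhang's iteration) does not address it. The iteration erases the choice of \emph{initial metric}, but a change of the linearisation isomorphism $[p]^\ast\mathcal{O}(D)\cong\mathcal{O}(D)^{\otimes p}$ by a constant $u$ shifts the fixed point itself by $|u|^{1/(p-1)}$; it is not washed out. The correct resolution is structural: since $\Phi$ and $\Phi'$ are isomorphisms of line bundles on $\X$ over $\mathrm{Spec}(\Z)$ and $D$, $D'$ are both \emph{horizontal}, the divisor of $f$ has no vertical component, forcing $c\in\Gamma(\X,\mathcal{O}_\X^\times)=\Z^\times=\{\pm 1\}$. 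Hence $|c|=1$ at the archimedean place and the pulled-back metrics coincide on the nose. Your second route via the Batyrev--Tschinkel support functions would also work and is closer to how \cite{Maillot} actually argues.
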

\begin{proof}
 Voir \cite[proposition 3.4.1]{Maillot}.
\end{proof}

\begin{definition}\label{canoniquefaisceau}  Soit $X$ une variété torique lisse. Soit
$\overline{\mathcal{F}}=(\mathcal{F},\overline{E}_\bullet\rightarrow \mathcal{F})$ un faisceau cohérent métrisé sur $X$, on dira que
la métrique de $\mathcal{F}$ est intégrable (resp. canonique) si chaque terme de $\overline{E}_\bullet$ est une somme directe
orthogonale de fibrés en droites munis de métriques intégrables (resp. de leur métriques canoniques). On appelle
$\overline{\mathcal{F}}$
fibré intégrable. On note $\overline{\mathcal{F}}^c$ au lieu de $\overline{\mathcal{F}}$ lorsqu'on considère des métriques canoniques partout.
\end{definition}
\begin{proposition}Soit $f:Y\rightarrow X$ un morphisme équivariant de variétés toriques projectives lisses, si $\overline{\mathcal{F}}^c$ est un faisceau cohérent métrisé dont la métrique est canonique, alors $f^\ast \overline{\mathcal{F}}^c$ l'est aussi
 \end{proposition}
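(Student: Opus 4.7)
The strategy is to reduce the statement to the case of a single equivariant line bundle, and then invoke the functoriality of the Zhang--Batyrev--Tschinkel construction of the canonical metric. By Definition \ref{canoniquefaisceau}, $\overline{\mathcal{F}}^c=(\mathcal{F},\overline{E}_\bullet\to\mathcal{F})$ where each term $\overline{E}_i$ decomposes as an orthogonal direct sum $\bigoplus_j (L_{i,j},\vc_{L_{i,j},\infty})$ of equivariant line bundles equipped with their canonical metrics. Pullback by $f$ yields a complex $f^\ast\overline{E}_\bullet\to f^\ast\mathcal{F}$, and since $f^\ast$ commutes with finite direct sums and sends a line bundle to a line bundle, it suffices to establish that for every equivariant line bundle $L$ on $X$ one has
\[
f^\ast\bigl(L,\vc_{L,\infty}\bigr)=\bigl(f^\ast L,\vc_{f^\ast L,\infty}\bigr).
\]

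First I would realize $L$ as $\mathcal{O}_X(D)$ via a $T_X$-equivariant isomorphism $\Phi:L\to\mathcal{O}_X(D)$ for a horizontal $T_X$-invariant divisor $D$, as provided by Proposition \ref{metriquecanonique}. Since $f$ is equivariant, $f^\ast D$ is a horizontal $T_Y$-invariant divisor on $Y$ and $f^\ast\Phi$ identifies $f^\ast L$ with $\mathcal{O}_Y(f^\ast D)$. The problem is thus reduced to comparing $f^\ast\vc_{\mathcal{O}_X(D),\infty}$ with the canonical metric $\vc_{\mathcal{O}_Y(f^\ast D),\infty}$.

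The key point is the functoriality of Zhang's construction. The canonical metric on $\mathcal{O}_X(D)$ is uniquely characterized as the uniform limit of the sequence obtained by iterating the pullback under the equivariant endomorphism $[p]_X:X\to X$ (multiplication by some integer $p\geq 2$ on the torus), starting from any $\cl$ initial metric; this is independent of the starting metric by \cite[théorème 3.3.3]{Maillot}. Equivariance of $f$ furnishes the intertwining relation $f\circ[p]_Y=[p]_X\circ f$, so if $h_0$ is a smooth reference metric on $\mathcal{O}_X(D)$ and $h_n$ denotes the $n$-th Zhang iterate on $X$, then $f^\ast h_n$ equals the $n$-th Zhang iterate on $Y$ starting from $f^\ast h_0$. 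Uniform convergence is preserved by pullback by a continuous map, so passing to the limit gives the desired equality $f^\ast\vc_{\mathcal{O}_X(D),\infty}=\vc_{\mathcal{O}_Y(f^\ast D),\infty}$.

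The main obstacle is not the line-bundle equality itself, which is essentially contained in \cite[Chapitre 3]{Maillot}, but rather ensuring that the metrized-sheaf structure in the sense of \cite{Burgos} is preserved under pullback when extended to integrable metrics: one must check that $f^\ast\overline{E}_\bullet$ still represents $f^\ast\mathcal{F}$ as a metrized coherent sheaf in the generalized setting of this section, and that the Bott-Chern classes of \cite[\S 4.3]{Maillot} used to make sense of integrability are compatible with the equivariant pullback. This is a bookkeeping task, handled by pulling back the defining diagrams term by term and appealing to the functoriality of generalized differential forms established earlier in this section.
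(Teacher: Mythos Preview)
Your proposal is correct and follows the same route as the paper: reduce to the line-bundle case and invoke the known functoriality of canonical metrics under equivariant pullback. The paper's own proof is a single sentence (``Cela r\'esulte du cas classique pour les fibr\'es en droites''), so your write-up simply expands what the paper leaves implicit, in particular the Zhang-iteration argument for $f^\ast\vc_{L,\infty}=\vc_{f^\ast L,\infty}$ and the bookkeeping on the resolution; none of this departs from the intended strategy.
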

\begin{proof}   Cela résulte du cas classique pour les fibrés en droites.
 \end{proof}

\begin{proposition}
Tout fibré vectoriel équivariant sur une variété torique lisse admet une métrique canonique.
\end{proposition}
\begin{proof}
On sait que tout fibré vectoriel équivariant, et plus généralement un faisceau équivariant, sur $X$, voir  \cite{Klyachko} pour le cas de fibré vectoriel équivariant, admet une résolution finie canonique en fibrés vectoriels scindés. Ce qui montre que comme pour le cas des fibrés en droites, on peut associer à tout faisceau équivariant  une métrique canonique généralisée.
\end{proof}

On se propose maintenant de comparer la notion de métriques canonique pour un fibré en droites et la définition \eqref{canoniquefaisceau}.

\begin{proposition}
on va montrer que si $L$ est un fibré en droites sur $X$ alors

\[
\widetilde{\mathrm{ch}}\bigl( \overline{L}^c\longrightarrow \overline{L}_\infty\bigr)=0
\]
 Plus généralement, si $\mathcal{F}=\oplus_{k=1}^e L_k$ est un fibré vectoriel scindé. On munit $\mathcal{F}$ d'une métrique
 canonique c'est à dire $\mathcal{F}^c=(\mathcal{F}, \overline{E}_\bullet\rightarrow \mathcal{F})$ comme dans la définition ci-dessus.
  alors
\[
 \widetilde{c}_1\bigl(\overline{\mathcal{F}}^c\longrightarrow \oplus_{k=1}^e \overline{L}_k \bigr)=0\quad\text{et}\quad
 \widetilde{\mathrm{ch}}_{\max}\bigl(\overline{\mathcal{F}}^c\longrightarrow \oplus_{k=1}^e \overline{L}_{k,\infty} \bigr)=0.
\]
dans $\widetilde{A}(X)$\footnote{$\widetilde{A}(X)$ est par définition l'espace des $(\ast,\ast)$-formes différentielles $\cl$ sur $X$ modulo $\mathrm{Im}\pt+\mathrm{Im}\overline{\pt}$. }.
\end{proposition}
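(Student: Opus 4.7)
The plan is to deduce the vanishing by a three step reduction, concluding with a Bedford--Taylor limit argument in the formalism of generalized differential forms.

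First, I would reduce the split case to that of a single line bundle. Both $\overline{\mathcal{F}}^c$, through its resolution whose terms are orthogonal direct sums of line bundles with canonical metrics, and $\oplus_k \overline{L}_k$ (respectively $\oplus_k \overline{L}_{k,\infty}$) decompose compatibly as orthogonal direct sums indexed by $k$. The diagram argument used just before this proposition, combined with the additivity of $\widetilde{\mathrm{ch}}$ under orthogonal direct sums, then yields
\[
\widetilde{c}_1\bigl(\overline{\mathcal{F}}^c\longrightarrow \oplus_k \overline{L}_k\bigr)=\sum_{k=1}^e \widetilde{c}_1\bigl(\overline{L}_k^c\longrightarrow \overline{L}_k\bigr),
\]
together with the analogous identity for $\widetilde{\mathrm{ch}}_{\max}$. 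It is therefore enough to establish the vanishing for a single line bundle $L$.

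Next, I would approximate all canonical metrics simultaneously by smooth positive metrics. Fix a resolution $\overline{E}_\bullet\to L$ whose terms are orthogonal direct sums of equivariant line bundles carrying their canonical metrics. By Zhang's iterative construction recalled in the introduction, each of these canonical metrics is a uniform limit of smooth positive metrics, and by the functoriality of that construction with respect to tensor products, duals and equivariant morphisms, one can arrange the approximations on the terms of $\overline{E}_\bullet$ and on $L$ itself to be mutually compatible. This produces, for every $n$, a smooth metrized complex $\overline{E}_{\bullet,n}\to (L,\vc_{L,n})$ in which the coherent-sheaf metric on $L$ induced by Burgos' construction agrees with the direct smooth approximation $\vc_{L,n}$ of the canonical metric on $L$.

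For each such $n$ the classical Burgos--Litcanu--Freixas theory applies, and the smooth Bott--Chern secondary class $\widetilde{\mathrm{ch}}(\overline{L}^{c,n}\to \overline{L}_{\infty,n})$ vanishes identically in $\widetilde{A}(X)$, simply because the two smooth metric structures it compares coincide on $L$ by construction. Passing to the limit $n\to\infty$ using Theorem \eqref{BBBTTT} and the continuity of the generalized Bott--Chern construction of \cite[\S4.3]{Maillot}, exactly as in the limiting arguments of the proof of Theorem \eqref{TAH}, yields $\widetilde{\mathrm{ch}}(\overline{L}^c\to \overline{L}_\infty)=0$ in $\widetilde{A}(X)$.

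The main obstacle is precisely the compatibility arranged in the second step: one must verify that the chosen smooth approximations of the canonical metrics on the line summands of $\overline{E}_\bullet$ induce, through the resolution, exactly (or, at worst, up to $\partial\overline{\partial}$-exact generalized forms converging to zero in $\widetilde{A}(X)$) the chosen smooth approximation of the canonical metric on $L$. This compatibility ultimately rests on the functoriality of the Batyrev--Tschinkel/Zhang construction together with the equivariance of the Klyachko resolution in the toric setting; once it is in place, the remaining convergence step is routine given the machinery already developed in the preceding sections.
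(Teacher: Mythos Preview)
Your approach has two genuine gaps.

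First, the reduction to a single line bundle is unfounded. The resolution $\overline{E}_\bullet \to \mathcal{F}$ defining $\overline{\mathcal{F}}^c$ is only required to have each term an orthogonal direct sum of line bundles with canonical metrics; nothing forces this resolution to split as $\oplus_k (\overline{E}_{\bullet,k} \to L_k)$ along the decomposition $\mathcal{F}=\oplus_k L_k$. The claimed additivity $\widetilde{c}_1(\overline{\mathcal{F}}^c \to \oplus_k \overline{L}_k) = \sum_k \widetilde{c}_1(\overline{L}_k^c \to \overline{L}_k)$ therefore has no basis.

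Second, and more seriously, the assertion that $\widetilde{\mathrm{ch}}(\overline{L}^{c,n}\to \overline{L}_{\infty,n}) = 0$ in the smooth approximated situation is incorrect. In the Burgos formalism, the class comparing the metrized coherent sheaf $(L,\overline{E}_{\bullet,n}\to L)$ with $(L,\vc_{L,n})$ is precisely the secondary Bott--Chern class of the exact complex $0\to \overline{E}_{l,n}\to\cdots\to \overline{E}_{1,n}\to (L,\vc_{L,n})\to 0$ with the given smooth metrics, and this class has no reason to vanish, whatever ``compatibility'' of the Zhang iterations one arranges: the secondary class of a nontrivial exact sequence of hermitian bundles is generically nonzero, and the Zhang construction provides no control over it. The obstacle you flag in your last paragraph is not a technical verification but the entire content of the proposition.

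The paper's proof is completely different and avoids approximation altogether. For $\widetilde{c}_1$ it takes the alternating determinant of the resolution: the canonical isomorphism
\[
\det(E_l)\otimes\det(E_{l-2})\otimes\cdots \;\simeq\; \det(E_{l-1})\otimes\det(E_{l-3})\otimes\cdots
\]
compares two line bundles, each carrying the tensor product of canonical metrics, which by \cite[proposition~3.3.6]{Maillot} is again the canonical metric; since $h_L=h_{L'}\exp(\widetilde{c}_1(\overline{\eta}))$, uniqueness of the canonical metric (proposition~\eqref{metriquecanonique}) forces $\widetilde{c}_1(\overline{\eta})=0$. For $\widetilde{\mathrm{ch}}_{\max}$ it passes to arithmetic Chow groups via the map $a$ of \cite[p.~53]{Maillot}, writes $a(\widetilde{\mathrm{ch}})$ as a difference of arithmetic Chern characters, and invokes \cite[lemme~7.4.2]{Maillot} on the vanishing of maximal powers of arithmetic first Chern classes of canonically metrized line bundles.
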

\begin{proof}
On suppose que $E_\bullet=E_l\rightarrow E_{l-1}\rightarrow \cdots \rightarrow E_1$ tel que:
\[
 \eta:0\longrightarrow E_l\longrightarrow E_{l-1}\longrightarrow \cdots \longrightarrow E_1\lra \mathcal{F}\longrightarrow 0,
\]
soit une suite exacte de fibrés vectoriels sur $X$.
On suppose que pour tout $i=1,\ldots,l$, $E_i=\bigoplus_{k=1}^{e_i}L_{i,k}$ où $L_{i,1},L_{i,2},\ldots,L_{i, e_i}$ sont des fibrés
en
droites sur  $X$. On munit alors $E_i$ de la métrique suivante $h_{E_i}$:
\[
 h_{E_i}=h_{L_{i,1,\infty}}\oplus \cdots \oplus h_{L_{i,e_i,\infty}},
\]
où $h_{L_{i,k,\infty}}$ est la métrique canonique de $L_{i,k}$ pour $k=1,\ldots,e_i$ et pour tout $i=0,\ldots,l$. On note par
$\overline{\eta}$ la suite $\eta$ munie de ces métriques.

Si l'on pose $L=\det\bigl(E_l\bigr)\otimes \det\bigl(E_{l-2}\bigr)\otimes \det\bigl(E_{l-[\frac{l}{2}]} \bigr)$ et
$L'=\det\bigl(E_{l-1}\bigr)\otimes \det\bigl(E_{l-3}\bigr)\cdots \det\bigl(E_{l-[\frac{l+1}{2}]} \bigr)$ alors on dispose  d'un
isomorphisme canonique induit par $\eta$:
\[
 \al:L\longrightarrow L'.
\]
Par construction,  $L$ (resp. $L'$) est muni  de la métrique $h_L:=\det\bigl(h_{E_l}\bigr)\otimes \det\bigl(h_{E_{l-2}}\bigr)\otimes
\cdots\det\bigl(h_{E_{l-[\frac{l}{2}]}} \bigr)$ $\Bigl($resp. de  $h_{L'}:=\det\bigl(h_{E_{l-1}}\bigr)\otimes
\det\bigl(h_{E_{l-3}}\bigr)\cdots \det\bigl( E_{l-[\frac{l+1}{2}]}\bigr)\Bigr)$. D'après \eqref{metriquecanonique} et
\cite[proposition 3.3.6]{Maillot} ces deux métriques sont les métriques canoniques de $L$ et $L'$.

On a
\[
h_L=h_{L'}\exp\bigl(\widetilde{c}_1(\overline{\eta})\bigr).
\]
Par unicité de la métrique canonique, voir \eqref{metriquecanonique}, on conclut que
\[
 \widetilde{c}_1(\overline{\eta})=0.
\]

Soit $\X$ un modèle  de $X$ sur $\mathrm{Spec}(\Z)$. On considère, voir \cite[p. 53]{Maillot}:
\begin{align*}
 a:\widetilde{\overline{A}}^{\ast-1,\ast-1}&\longrightarrow \widehat{\mathrm{CH}}_{int}^\ast\bigl(X\bigr)\\
\beta&\longmapsto \bigl[(0,\beta) \bigr],
\end{align*}
 D'après \cite{Maillot}, on a:
\[
a\Bigl( \widetilde{\mathrm{\mathrm{ch}}} \bigl(\overline{\mathcal{F}}^c\longrightarrow \oplus_{k=1}^e \overline{L}_{k,\infty} \bigr)\Bigr)=\widehat{\mathrm{\mathrm{ch}}}\bigl(\overline{\mathcal{F}}^c)-\widehat{\mathrm{\mathrm{ch}}}\bigl(\oplus_{k=1}^e \overline{L}_{k,\infty} \bigr),
\]
où $\widehat{\mathrm{\mathrm{ch}}}$ est la classe de Chern arithmétique associée au caractère $ch$, et $\widehat{\mathrm{\mathrm{ch}}}\bigl(\overline{\mathcal{F}}^c)$
est par
définition égale à $\sum_{j=1}^l (-1)^j \widehat{\mathrm{\mathrm{ch}}}\bigl(\overline{E}_j)$.  Par  \cite[lemme 7.4.2]{Maillot},
les puissances maximales des premières classe arithmétiques de Chern associées aux fibrés en droites munis de métriques canoniques
sont nulles.

Par suite,
\[
\widetilde{\mathrm{\mathrm{ch}}}_{\max} \bigl(\overline{\mathcal{F}}^c\longrightarrow \oplus_{k=1}^e \overline{L}_{k,\infty}
\bigr)=0.
\]

\end{proof}
\subsection{Torsion analytique généralisée associée aux fibrés intégrables sur variété torique}

Soit $X$ une  variété torique non-singulière munie de $\omega$, une forme kählérienne et $\overline{\mathcal{L}}$ un fibré hermitien sur $X$. On
 notera par $h_{Q,\overline{TX},\overline{\mathcal{L}}} $ la métrique de Quillen  au lieu de
 $h_{Q,\omega,\overline{\mathcal{L}}}$, où $\overline{TX}$ est le fibré tangent  muni de la métrique   associée à $\omega$.\\

On munit $\overline{L}_\infty$  de sa métrique canonique.  On pose $\overline{TX}^c=\bigl(TX,\, 0\longrightarrow \overline{\mathcal{O}}_\infty^r\longrightarrow \oplus_{i=1}^{N+r}\overline{\mathcal{O}(D_i)}_\infty\longrightarrow TX \bigr)$  considéré comme un fibré canonique où

 \[\varepsilon_X:0\lra \mathcal{O}_X^r\lra \oplus_{i=1}^{N+r}\mathcal{O}(D_i)\lra TX\lra 0, \]
est la suite d'Euler sur $X$, $r$ est le rang de $\mathrm{Pic}(X)$.

Dans ce cas, on  définit $h_{Q,\overline{TX}^c,\overline{L}_\infty}$, la norme de Quillen associé au fibré $\overline{L}_\infty$
et à $\overline{TX}^c$  en posant:
\[\log h_{Q,\overline{TX}^c,\overline{L}_\infty}:= -\int_Xch\bigl(\overline{L}_\infty\bigr)\widetilde{Td}\Bigl(0 \longrightarrow \overline{TX}^c\longrightarrow \overline{TX}\longrightarrow 0\Bigr)+ \log h_{Q,\overline{TX},\overline{L} }.
\]
 On montre, voir \cite[théorème 2.19]{Burgos}, que:
 \[
\widetilde{Td}\Bigl(0 \longrightarrow \overline{TX}^c\longrightarrow \overline{TX}\longrightarrow 0\Bigr)=\widetilde{Td}(\overline{\varepsilon}_X),
              \]
où \[
    \overline{\varepsilon}_X:0\longrightarrow \overline{\mathcal{O}}_\infty^{\oplus{r}}\longrightarrow \oplus_{i=1}^{N+r}\overline{\mathcal{O}(D_i)}_\infty\longrightarrow \overline{TX}\longrightarrow 0.
   \]
Donc
\[\log h_{Q,\overline{TX}^c,\overline{L}_\infty }=-
 \int_X \mathrm{ch}(\overline{L}_\infty)\widetilde{Td}(\overline{\varepsilon}_X)+ \log h_{Q\overline{TX},\overline{L}_\infty}.\]
On a,
\[
a\bigl(\widetilde{Td}(\overline{\varepsilon}_X)\bigr)=\prod_{i=1}^{N+r}\widehat{Td}\bigl(\overline{\mathcal{O}(D_i)}_\infty\bigr)
-\widehat{Td}(\overline{TX}).
\]

Par le théorème de Riemann-Roch arithmétique, voir \cite{ARR} on a:
\[
\log h_{Q\overline{TX},\overline{L}_\infty}=\int_X \widehat{\mathrm{\mathrm{ch}}}(\overline{ L}_\infty)\,Td^A(\overline{TX}).
\]

On obtient  alors, en utilisant l'annulation des puissance maximales des classes arithmétiques associées aux métriques canoniques:
\begin{align*}
-\log h_{Q,\overline{TX}^c,\overline{L}_\infty }
&=\int_X\widehat{\mathrm{\mathrm{ch}}}(\overline{L}_\infty)\,\biggl(\prod_{i=1}^{N+r}\widehat{Td}
\bigl(\overline{\mathcal{O}(D_i)}_\infty\bigr)-\widehat{Td}(\overline{TX})\biggr)+\int_X\widehat{\mathrm{\mathrm{ch}}}(\overline{
L}_\infty)\,Td^A(\overline{TX})\\
&=-\int_X\widehat{\mathrm{\mathrm{ch}}}(\overline{L}_\infty)\widehat{Td}(\overline{TX})+\int_X\widehat{\mathrm{\mathrm{ch}}}
(\overline{ L}_\infty)\,\widehat{Td}(\overline{TX})-\int_X \mathrm{ch}(L)Td(TX)R(TX).
\end{align*}
Donc,
\[
 \log h_{Q,\overline{TX}^c,\overline{L}_\infty }=\int_X \mathrm{ch}(L)Td(TX)R(TX).
\]

\begin{theorem}\label{MQG}
 Avec les notations précédentes, on a
\[
 h_{Q,\overline{T\p^1}^c,\overline{\mathcal{O}(m)}_\infty }=h_{Q,(\p^1,\omega_\infty);\overline{\mathcal{O}(m)}_\infty}\quad \forall
 m\in \N.
\]

\end{theorem}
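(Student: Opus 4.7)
The strategy is to compute both sides explicitly and compare. The left-hand side has already been evaluated in the discussion preceding the theorem: combining the definition of $h_{Q,\overline{T\p^1}^c,\overline{\mathcal{O}(m)}_\infty}$ via the Euler sequence $\overline{\varepsilon}_{\p^1}$, the arithmetic Riemann-Roch theorem applied to the smooth Quillen metric $h_{Q,\overline{T\p^1},\overline{\mathcal{O}(m)}_\infty}$, and the vanishing of top powers of canonical first arithmetic Chern classes (Maillot's lemma 7.4.2), one obtains
\[
\log h_{Q,\overline{T\p^1}^c,\overline{\mathcal{O}(m)}_\infty} = \int_{\p^1}\mathrm{ch}(\mathcal{O}(m))\,Td(T\p^1)\,R(T\p^1),
\]
where $R$ is the Gillet-Soul\'e $R$-genus. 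It remains to show that the generalized Quillen metric on the right-hand side is given by the same quantity.

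For this, I would combine the identity $\log h_Q = \log h_{L^2} + T$ with the explicit formula for the generalized torsion furnished by the preceding proposition,
\[
T((\p^1,\omega_\infty);\overline{\mathcal{O}(m)}_\infty) = 4\zeta'_\Q(-1) - \tfrac{1}{6} + \log\Bigl(\tfrac{(m+2)^{m+1}}{((m+1)!)^2}\Bigr),
\]
and then evaluate $\log h_{L^2,(\p^1,\omega_\infty),(\mathcal{O}(m),h_\infty)}$ in closed form. Since both $\omega_\infty = \frac{i}{2\pi}\frac{dz\wedge d\bar z}{\max(1,|z|^4)}$ and the canonical metric on $\mathcal{O}(m)$ are invariant under the compact torus action, the monomials $1, z, \ldots, z^m$ constitute an orthogonal basis of $H^0(\p^1,\mathcal{O}(m))$; each squared norm $\|z^k\|_{L^2}^2$ reduces to an elementary radial integral split over $|z|\leq 1$ and $|z|\geq 1$.

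In parallel, the integral $\int_{\p^1}\mathrm{ch}(\mathcal{O}(m))\,Td(T\p^1)\,R(T\p^1)$ is computed using the standard power-series expansion of $R$, retaining only the $(1,1)$-component since $\dim_\C \p^1 = 1$. A term-by-term comparison of the two closed-form expressions then yields the desired equality.

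The main obstacle is the accurate bookkeeping of the explicit zeta-function values, Bernoulli-type coefficients, and logarithmic contributions appearing on both sides; however, since most of the underlying computations are already carried out in \cite{Mounir2} (in particular the torsion formula of the preceding proposition), the remaining verification is essentially an identity check. The conceptual content—that the Burgos-Litcanu-Freixas construction agrees with the limit-based construction of Theorem \eqref{torsSurRiem}—is encoded in the vanishing property of top powers of canonical arithmetic first Chern classes, which is what makes both sides expressible through the same $R$-genus integral.
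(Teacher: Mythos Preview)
Your approach is correct in principle but takes a genuinely different and far more laborious route than the paper. You propose to evaluate each side independently as a closed-form expression---the left via the $R$-genus integral already derived, the right via the explicit torsion formula plus a direct computation of the $L^2$ metric in the monomial basis---and then check that the two numerical answers agree. The paper instead compares the two Quillen metrics \emph{directly}: by definition and by the generalized anomaly formula of Theorem~\ref{torsSurRiem}, their difference is
\[
-\int_{\p^1} \mathrm{ch}(\overline{\mathcal{O}(m)}_\infty)\,\widetilde{Td}\Bigl(0\lra \overline{T\p^1}^c\lra \overline{T\p^1}_\infty\lra 0\Bigr),
\]
and the paper observes that this Bott-Chern Todd class vanishes in $\widetilde{A}(\p^1)$. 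This single vanishing (which follows from the earlier proposition comparing $\overline{\mathcal{F}}^c$ with $\oplus\overline{L}_{k,\infty}$, applied to the Euler sequence and the line bundle $T\p^1\simeq\mathcal{O}(2)$) settles the theorem in one line. Your method would certainly recover the same equality, but at the cost of tracking zeta values, factorials, and radial integrals whose agreement is, a priori, opaque; the paper's argument explains \emph{why} they must agree without ever computing them. Note also that the vanishing you invoke---top powers of canonical arithmetic first Chern classes---is what was used to obtain the $R$-genus formula for the left-hand side, but the actual proof of the theorem rests on the separate (and in dimension one, easier) vanishing of the secondary Todd class comparing the two canonical structures on $T\p^1$.
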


\begin{proof}
 Cela résulte de:
\[
 \widetilde{Td}\Bigl(0\lra \overline{T\p^1}^c\lra \overline{T\p^1}_\infty\lra 0 \Bigr)=0,
\]
dans $\widetilde{A}(\p^1)$.
\end{proof}

\bibliographystyle{plain}
\bibliography{biblio}

\begin{thebibliography}{10}

\bibitem{heat}
Nicole Berline, Ezra Getzler, and Mich{\`e}le Vergne.
\newblock {\em Heat kernels and {D}irac operators}.
\newblock Grundlehren Text Editions. Springer-Verlag, Berlin, 2004.
\newblock Corrected reprint of the 1992 original.

\bibitem{BGS1}
J.-M. Bismut, H.~Gillet, and C.~Soul{\'e}.
\newblock Analytic torsion and holomorphic determinant bundles. {I}.
  {B}ott-{C}hern forms and analytic torsion.
\newblock {\em Comm. Math. Phys.}, 115(1):49--78, 1988.

\bibitem{Burgos2}
Jos{\'e}~Ignacio Burgos~Gil, Patrice Philippon, and Mart{\'{\i}}n Sombra.
\newblock {A}rithmetic geometry of toric varieties. {M}etrics, measures and
  heights.
\newblock {\em arXiv.org}, arXiv:1105.5584v1 [math.AG], Mai 2011.

\bibitem{Demailly}
Jean-Pierre Demailly.
\newblock Monge-{A}mp\`ere operators, {L}elong numbers and intersection theory.
\newblock In {\em Complex analysis and geometry}, Univ. Ser. Math., pages
  115--193. Plenum, New York, 1993.

\bibitem{Character}
Henri Gillet and Christophe Soul{\'e}.
\newblock Characteristic classes for algebraic vector bundles with {H}ermitian
  metric. {I}.
\newblock {\em Ann. of Math. (2)}, 131(1):163--203, 1990.

\bibitem{ARR}
Henri Gillet and Christophe Soul{\'e}.
\newblock An arithmetic {R}iemann-{R}och theorem.
\newblock {\em Invent. Math.}, 110(3):473--543, 1992.

\bibitem{Mounir2}
Mounir Hajli.
\newblock {S}ur la fonction zêta associée au laplacien singulier associé aux
  métriques canoniques sur la droite projective complexe.
\newblock {\em Arxiv}.

\bibitem{Mounir6}
Mounir Hajli.
\newblock {S}ur une inégalité fonctionnelle sur les variétés toriques avec
  application à la torsion analytique holomorphe.
\newblock {\em Arxiv}.

\bibitem{Burgos}
R.~Litcanu J.~I. Burgos~Gil.
\newblock {S}ingular {B}ott-{C}hern classes and the arithmetic
  {G}rothendieck-{R}iemann-{R}och theorem for closed immersions.
\newblock {\em Documenta Math.}, 15:73--176, 2010.

\bibitem{Klyachko}
A.A. Klyachko.
\newblock {Equivariant bundles on toric varieties. (Equivariant bundles on
  toral varieties.)}.
\newblock {\em Math. USSR, Izv.}, 35(2):337--375, 1990.

\bibitem{Maillot}
Vincent Maillot.
\newblock {G}\'eom\'etrie d'{A}rakelov des vari\'et\'es toriques et fibr\'es en
  droites int\'egrables.
\newblock {\em M\'em. Soc. Math. Fr. (N.S.)}, 80:vi+129, 2000.

\bibitem{RaySinger}
D.~B. Ray and I.~M. Singer.
\newblock Analytic torsion for complex manifolds.
\newblock {\em Ann. of Math. (2)}, 98:154--177, 1973.

\bibitem{Rossler}
Damian Roessler.
\newblock An {A}dams-{R}iemann-{R}och theorem in {A}rakelov geometry.
\newblock {\em Duke Math. J.}, 96(1):61--126, 1999.

\bibitem{Zhang}
Shouwu Zhang.
\newblock Small points and adelic metrics.
\newblock {\em J. Algebraic Geom.}, 4(2):281--300, 1995.

\end{thebibliography}

\vspace{1cm}

\begin{center}
{\sffamily \noindent National Center for Theoretical Sciences, (Taipei Office)\\
 National Taiwan University, Taipei 106, Taiwan}\\

 {e-mail}: {hajli@math.jussieu.fr}

\end{center}

\end{document}